\newtheorem{theorem}{Theorem}[section]
\newtheorem{lemma}[theorem]{Lemma}
\newtheorem{proposition}[theorem]{Proposition}
\newtheorem{corollary}[theorem]{Corollary}
\newtheorem{example}[theorem]{Example}
\newtheorem{remark}[theorem]{Remark}
\newtheorem{definition}[theorem]{Definition}
\def\N{\mathbb N}
\def\C{\mathbb C}
\newcommand\vN{\mathop{\rm VN}}
\newcommand\supp{\mathop{\rm supp}}
\DeclareMathOperator{\spnn}{span}
\newcommand\Ag{\mathop{A(G/G_e)}}
\newcommand\Ah{\mathop{A(H/H_e)}}
\newcommand\Ge{\mathop{G/G_e}}
\newcommand\He{\mathop{H/H_e}}
\newcommand\la{\lambda}
\newcommand\Omo{\Omega_{\text{o}}}
\newcommand\Omoc{\Omega_{\text{o}}^{\text{c}}}
\title{Linear preservers on idempotents of Fourier algebras}
\author{Ying-Fen Lin and Shiho Oi}
\address[Y.-F. Lin]{Mathematical Sciences Research Centre, Queen's University Belfast, Belfast, BT7 1NN, U.K. \\
Email: y.lin@qub.ac.uk}
\address[S. Oi]{Department of Mathematics, Faculty of Science, Niigata University, Niigata 950-2181, Japan \\
Email: shiho-oi@math.sc.niigata-u.ac.jp}
\keywords{Fourier algebras, idempotents, linear preservers, quotient groups}
\subjclass[2020]{47B48, 47B49, 43A22}
\begin{document}

\maketitle

\begin{abstract}
In this article, we give a representation of bounded complex linear operators which preserve idempotent elements on the Fourier algebra of a locally compact group. When such an operator is moreover positive or contractive, we show that the operator is induced by either a continuous group homomorphism or a continuous group anti-homomorphism.
If the groups are totally disconnected, bounded homomorphisms on the Fourier algebra can be realised by the idempotent preserving operators.
\end{abstract}

\maketitle

\section{Introduction}
Let $G$ be a locally compact Hausdorff group. The Fourier-Stieltjes $B(G)$ and the Fourier algebras $A(G)$ of $G$ were introduced by Eymard in his celebrating paper \cite{Eym}. Recall that $B(G)$ is the linear combination of all continuous positive definite functions on $G$, as a Banach space, $B(G)$ is naturally isometric to the predual of $W^*(G)$, the von Neumann algebras generated by the universal representations $\omega_G$ of $G$. Moreover, it is a commutative Banach $*$-algebra with respect to pointwise multiplication and complex conjugation. The Fourier algebra $A(G)$ is the closed ideal of $B(G)$ generated by the functions with compact supports. As a Banach space, $A(G)$ is isometric to the predual of the group von Neumann algebra $\vN(G)$, the von Neumann algebra generated by the left regular representations $\lambda_G$ of $G$.
It is well known that $A(G)$ is regular, semisimple, and the Fourier and the Fourier-Stieltjes algebras are both subalgebras of $C_b(G)$, the algebra of continuous bounded functions on $G$. 

Takesaki and Tatsuuma in \cite{TT} showed that there is a one-to-one correspondence between compact subgroups of $G$ and non-zero right invariant closed self-adjoint subalgebras of $A(G)$. As a refinement, Bekka, Lau and Schlichting in \cite{BLS} studied non-zero, closed, invariant $*$-subalgebras of $A(G)$. They showed that these spaces are the Fourier algebras $A(G/K)$ of the quotient group $G/K$ for some compact normal subgroup $K$ of $G$. On the other hand, Forrest \cite{Fo} introduced the Fourier algebra $A(G/K)$ of the left coset space $G/K$, where $K$ is a compact (not necessary to be normal) subgroup of the locally compact group $G$. This algebra can simultaneously be viewed as an algebra of functions on $G/K$ and as the subalgebra of $A(G)$ consisting of functions in $A(G)$ which are constants on left cosets of $K$. Note that $A(G/K)$ is regular and semisimple, the spectrum $\sigma(A(G/K))$ is $G/K$, and it is a norm closed left translation invariant $*$-subalgebra of $A(G)$.

A longstanding question in harmonic analysis is to determine all homomorphisms of Fourier or Fourier-Stieltjes algebras of any locally compact groups. For any pair of locally compact abelian groups $G$ and $H$, Cohen \cite{Co} characterised all bounded homomorphisms from the group algebra $L^1(G)$ to the measure algebra $M(H)$. In doing so, he made use of a profound discovery of his characterisation of idempotent measures on the groups. Cohen's results were generalized by Host in \cite{Host}, where he discovered the general form of idempotents in the Fourier-Stieltjes algebras, and characterised bounded homomorphisms from $A(G)$ to $B(H)$ when the group $G$ has an abelian subgroup of finite index. Further generalizations were made in \cite{Il, IlieSpronk} for any locally compact amenable group $G$, where completely bounded homomorphisms from $A(G)$ into $B(H)$ were characterised by continuous piecewise affine maps (see also \cite{Daws}). Most general results were given by Le Pham in \cite{LePham}, he determined all contractive homomorphisms from $A(G)$ into $B(H)$ for any locally compact groups $G$ and $H$. 

To describe idempotent elements in the Fourier-Stieltjes and the Fourier algebras, we first recall some terminologies. Let $G$ be a group and $K$ be a subgroup of $G$, we see that $Ks= ss^{-1}Ks$ for any $s\in G$, which means that we need not distinguish between left and right cosets of the group $G$. The \emph{coset ring} of $G$, denoted $\Omega(G)$, is the smallest ring of subsets of $G$ containing all cosets of subgroups of $G$. We denote $\Omo(G)$ the ring of subsets generated by open cosets of $G$, and similarly, $\Omoc(G)$ the ring of subsets generated by compact open cosets of $G$. By \cite{Host}, idempotents in the Fourier-Stieltjes algebra $B(G)$ are the indicator functions $1_F$ of an element $F$ of $\Omo(G)$. Let $I_B(G)$ be the set of all idempotent elements in $B(G)$. We denote the closure of the span of $I_B(G)$ by $B_I(G)$. From \cite[Proposition 1.1]{IS2}, we have that 
\begin{align*}
   A(G) \cap I_B(G)= \{1_Y: Y\in \Omoc(G)\},
\end{align*}
which gives rise idempotents in $A(G)$, denoted by $I(G)$. Let  $A_I(G)$ be the subalgebra of $A(G)$ generated by $I(G)$. Note that Ilie and Spronk \cite{IlieSpronk} showed that $1_F$ is an idempotent in $B(G)$ with $\|1_F\|_{B(G)}= 1$ if and only if $F$ is an open coset in $G$, however, there are idempotents with small norms \cite{MLePham} or with large norms \cite{Anoussis}. Moreover, the existence of idempotents of arbitrarily large norm implies the existence of homomorphisms of arbitrarily large norm (see \cite{Anoussis} for details). Thus, idempotent elements play an essential role in the study of homomorphisms on Fourier algebras. It is of its own interest to study the norms of idempotent elements in Fourier-Stieltjes and Fourier algebras, for our purpose we will focus solely on operators which preserve idempotents. 

In the rich literature of linear preservers, there are many works that study linear maps $T$ on spaces $X$ which preserve some subsets $S$ of $X$, i.e., $T(S) \subset S$.
Dieudonn\'e in \cite{Dieu} studied semi-linear maps on $M_n(\mathbb{K})$, the algebra of $n \times n$ matrices over a field $\mathbb{K}$, which preserve the set of all singular matrices. After that, many mathematicians considered linear maps on $M_n(\mathbb{K})$ that preserve subsets of matrices with different properties (e.g. \cite{Jacob, MM, Di, BPW} to name a few). 
In \cite{BS1}, it is shown that every complex linear map $T$ on $M_n(\mathbb{C})$ which preserves the set of all idempotents is either an inner automorphism or an inner anti-automorphism. In addition, in \cite{BS2} linear maps on $M_n(\mathbb{C})$ which send potent matrices (that is, matrices $A$ satisfy $A^r= A$ for some integer $r\geq 2$) to potent matrices were characterised. 
Since then, the studies of idempotent preserving maps have attracted considerable interest, see, e.g. \cite{Dolinar, GLS}. 
Recently, in \cite{LTWW} the authors proved that every additive map from the rational span of Hermitian idempotents in a von Neumann algebra into the rational span of Hermitian idempotents in a C*-algebra can be extended to a Jordan $*$-homomorphism. 

In this paper, we study bounded linear operators from $A(G)$ into $B(H)$ which send idempotents to idempotents. 
We show that such an operator will give rise an algebraic homomorphism on $A_I(G)$. The algebra $A_I(G)$ will be our main object of study, namely, we will characterise linear mappings defined on the Fourier algebra $A(G)$ or on $A_I(G)$ which preserve $I(G)$. Moreover, we show that when the groups are totally disconnected, idempotent preserving operators will recover algebraic homomorphisms on the Fourier algebra. 

\section{Main results}
Let $G$ be a locally compact Hausdorff group and $K$ be a closed subgroup of $G$. We will denote by $G/K$ the homogeneous space of left cosets of $K$. Let 
\begin{align*}
B(G: K):= \{u\in B(G): u(xk)= u(x) \, \text{for all } x\in G, k\in K\},
\end{align*}
this is, functions in $B(G)$ which are constant on cosets of $K$, and 
\begin{align*}
A(G: K):= \{u\in B(G: K): q(\supp(u)) \text{ is compact in } G/K\}^{-_{B(G)}},
\end{align*}
where $\supp(u)$ is the support of $u$ in $G$ and $q$ is the canonical quotient map from $G$ to $G/K$. If furthermore $K$ is a normal subgroup, by \cite[Proposition 3.2]{Fo} we have that $B(G: K)$ and $A(G: K)$ are isometrically isomorphic to the Fourier-Stieltjes and the Fourier algebras $B(G/K)$ and $A(G/K)$, respectively. Note that $A(G:K)\cap A(G)\neq \{0\}$ if and only if $K$ is compact.

Let $e$ be the identity of the group $G$, we denote the connected component of $e$ by $G_e$ which is a closed normal subgroup of $G$, thus, $G/G_e$ is a totally disconnected locally compact group. The following result about the algebra $A_I(G)$ generated by idempotents of $A(G)$ in relation with $A(G: G_e)$ was given in \cite{IS2}, for the completion we give a short proof in the paper.

\begin{proposition}\cite[Proposition 1.1(ii)]{IS2}\label{quotient}
If the connected component $G_e$ is compact then $A_I(G)= A(G:G_e)$, this is, $A_I(G)$ consists of all functions in $A(G)$ which are constant on cosets of $G_e$. In particular, if $G_e$ is not compact then $A_I(G)= \{0\}$. 
\end{proposition}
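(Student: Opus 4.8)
The plan is to extract everything from one topological observation: since an open subgroup of $G$ is automatically closed and $G_e$ is the connected component of the identity, $G_e$ is contained in \emph{every} open subgroup of $G$. Indeed, if $H$ is open then its complement is a union of translates $gH$, hence open, so $H$ is clopen; as $e\in H$ and $G_e$ is connected, $G_e\subseteq H$. Now every member of $\Omoc(G)$ is a finite Boolean combination of compact open cosets, and a compact open coset is precisely a coset $xU$ of a compact open subgroup $U$ (openness and compactness of $xU$ are equivalent to those of $U$). Since $G_e\subseteq U$, each such $xU$ is a union of cosets of $G_e$, and hence so is every $Y\in\Omoc(G)$. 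Consequently each idempotent $1_Y$, $Y\in\Omoc(G)$, is constant on cosets of $G_e$, i.e.\ $1_Y\in A(G:G_e)$. As $A(G:G_e)$ is a closed subalgebra of $A(G)$ containing all generators of $A_I(G)$, this already gives $A_I(G)\subseteq A(G:G_e)$.

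The same observation settles the last assertion. If $G_e$ is not compact, then any open subgroup $U$ contains the closed set $G_e$; were $U$ compact, $G_e$ would be a closed subset of a compact set and hence compact, a contradiction. Thus $G$ has no compact open subgroup, so there are no nonempty compact open cosets and $\Omoc(G)=\{\emptyset\}$. Then $I(G)=\{0\}$ and therefore $A_I(G)=\{0\}$.

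It remains to prove the reverse inclusion $A(G:G_e)\subseteq A_I(G)$ when $G_e$ is compact. In that case $G_e$ is a compact normal subgroup, so by \cite[Proposition 3.2]{Fo} there is an isometric isomorphism of $A(G:G_e)$ onto $\Ag$ carrying the idempotents of $A(G)$ lying in $A(G:G_e)$ exactly onto the idempotents $\{1_{\bar Y}:\bar Y\in\Omoc(\Ge)\}$ of $\Ag$. Hence it suffices to show that the closed subalgebra of $\Ag$ generated by its idempotents is all of $\Ag$; equivalently, writing $H:=\Ge$, that the span of $\{1_Y:Y\in\Omoc(H)\}$ is dense in $A(H)$, where $H$ is totally disconnected.

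The heart of the argument, and the step I expect to be the main obstacle, is this density in $A(H)$. By van Dantzig's theorem the compact open subgroups of $H$ form a neighbourhood basis of the identity, so finite linear combinations of indicators $1_{xU}$ of cosets of compact open subgroups $U$ are dense in $L^2(H)$. Every element of $A(H)$ is a matrix coefficient $s\mapsto\langle\lambda_H(s)\xi,\eta\rangle$ with $\xi,\eta\in L^2(H)$, and $(\xi,\eta)\mapsto\langle\lambda_H(\cdot)\xi,\eta\rangle$ is norm continuous from $L^2(H)\times L^2(H)$ into $A(H)$; thus coefficients with simple $\xi,\eta$ are dense in $A(H)$. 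Refining to a common compact open subgroup $U$ (a finite intersection of such is again one) and using bilinearity, it is enough to evaluate $\langle\lambda_H(\cdot)1_{xU},1_{yU}\rangle$. A direct computation gives $\langle\lambda_H(s)1_{xU},1_{yU}\rangle=|yU\cap sxU|$, which equals $|U|$ when $sxU=yU$ and $0$ otherwise, so this coefficient is $|U|\,1_{yUx^{-1}}$; since $yUx^{-1}=(yx^{-1})(xUx^{-1})$ is a coset of the compact open subgroup $xUx^{-1}$, it is a compact open coset. Therefore every such coefficient is a scalar multiple of an idempotent in $A(H)$, the span of idempotents is dense, and the reverse inclusion follows. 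The only delicate points are the identification of the coset $yUx^{-1}$ via conjugation and the norm continuity of the coefficient map, both of which are routine.
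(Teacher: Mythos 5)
Your proof is correct, and its overall skeleton matches the paper's: both directions hinge on the isometric isomorphism $u \mapsto u\circ q_G$ between $A(\Ge)$ and $A(G:G_e)$, with the forward inclusion coming from the fact that idempotents of $A(G)$ are constant on $G_e$-cosets and the reverse inclusion coming from density of the span of idempotents in the Fourier algebra of the totally disconnected quotient. The substantive difference is in how that density is obtained: the paper simply cites Forrest's theorem (\cite[Theorem 5.3]{Fo}), whereas you reprove it from scratch via van Dantzig's theorem, density of the span of $\{1_{xU}\}$ in $L^2(H)$, norm continuity of $(\xi,\eta)\mapsto \langle \lambda_H(\cdot)\xi,\eta\rangle$ into $A(H)$, and the computation $\langle \lambda_H(\cdot)1_{xU},1_{yU}\rangle = |U|\,1_{yUx^{-1}}$ with $yUx^{-1}=(yx^{-1})(xUx^{-1})$ a compact open coset --- all of which checks out (two left cosets of $U$ are equal or disjoint, and the common-refinement step works because $[U_i:\bigcap_k U_k]<\infty$ by compactness). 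You also make explicit two points the paper leaves implicit: that $G_e$ lies in every open (hence clopen) subgroup, which justifies both the inclusion $A_I(G)\subseteq A(G:G_e)$ and the degenerate case $\Omoc(G)=\{\emptyset\}$ when $G_e$ is non-compact. What the paper's route buys is brevity; what yours buys is a self-contained argument that does not rely on \cite{Fo} for the key density step, at the cost of invoking the standard coefficient-function description of $A(H)$ and its norm estimate.
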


\begin{proof}
Let $q_G: G \to G/G_e$ is the quotient map onto $G/G_e$. Since $G_e$ is compact, via $u \mapsto u\circ q_G$ we have that $A(G/G_e)$ is isometrically isomorphic to $A(G:G_e)$, which is a closed subalgebra of $A(G)$. Thus, $A_I(G)= \overline{\spnn} \{1_Y: Y\in \Omoc(G)\} \subseteq A(G:G_e)$. Conversely, since $G/G_e$ is totally disconnected, we have that the span of the idempotents of $A(G/G_e)$ is dense \cite[Theorem 5.3]{Fo}. Moreover  $A(G/G_e)$ is isomorphic to $A(G:G_e)$, thus, $A(G:G_e)$ is generated by idempotents of $A(G/G_e)$, so $A(G:G_e) \subseteq A_I(G)$. 
\end{proof} 

If the Fourier algebra which contains non-trivial idempotents, that is, the connected component $G_e$ is compact, then by Proposition \ref{quotient} there is an isometric isomorphism from $A_I(G)$ onto $A(G/G_e)$. More precisely, it induces an isometric isomorphism $\varphi_G: A_I(G) \to A(G/G_e)$ as 
\begin{align}\label{eq:varphi_G}
    \varphi_G(f)(q_G(a))=f(a)
\end{align}
for any $f \in A_I(G)$ and $a \in G$, where $q_G: G \to G/G_e$ is the quotient map onto $G/G_e$.

\subsection{Idempotent preserving maps with $T(I(G)) \subset I_B(H)$}\label{sec:idem preserver}

Let $G$ and $H$ be two locally compact groups. We consider a bounded complex linear map  $T: A(G) \to B(H)$ which satisfies 
\begin{align}\label{eq:prop_T}
T(I(G)) \subset I_B(H).
\end{align}
For any $f \in \spnn \{1_Y: Y\in \Omoc(G)\}$, there exists $\alpha_i \in \mathbb{C}$ and $Y_i \in \Omoc(G)$ such that $f= \Sigma_{k= 1}^{n} \alpha_k 1_{Y_k}$. Thus, we have $Tf=\Sigma^{n}_{k=1} \alpha_k T1_{Y_k} \in \spnn \{1_Y: Y\in \Omo(H)\} \subset B(H)$. Let us recall that $A_I(G)= \overline{\spnn} \{1_Y: Y\in \Omoc(G)\}$ and $B_I(H)= \overline{\spnn} \{1_Y: Y\in \Omo(H)\}$. Since $T$ is a bounded map, we obtain $T(A_I(G)) \subseteq B_I(H) \subset B(H)$.

Our aim is to obtain a representation of such a map $T$ on $A_I(G)$.  
If $I(G)=\{0\}$, then $A_I(G)=\{0\}$. Since $T$ is complex linear, we have $T=0$ on $A_I(G)$. 
Thus without loss of generality, we can assume that the Fourier algebra $A(G)$ have non-zero idempotent elements. 
Hence, the connected component $G_e$ is always a compact normal subgroup of $G$. On the other hand, we define the following map which will be used in sequel. 

\begin{definition}
Let $G$ be a locally compact Hausdorff group. Using the 
axiom of choice, let $S$ be a set of 
representatives of the cosets of $\Ge$, that is $G=\bigsqcup_{a\in S}aG_e$. Then we define a map $[\ \cdot \ ]_{\Ge}$ from $\Ge$ onto $S$ by 
\[
[aG_e]_{\Ge}=a
\]
for any $a \in S$. 
\end{definition}

We first have the following observations concerning the operator satisfying \eqref{eq:prop_T}. 

\begin{lemma}\label{idempotentproduct}
 The map $T$ preserves the disjointness of idempotents. This is, $Tf \cdot Tg= 0$ for any $f,g \in I(G)$ with $f\cdot g= 0$. 
 \end{lemma}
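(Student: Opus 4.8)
The plan is to exploit the elementary fact that a sum of two disjoint idempotents is again an idempotent, transport this fact through $T$, and then use the commutativity of the target algebra $B(H)$ to force the product to vanish.

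First I would observe that if $f,g\in I(G)$ satisfy $f\cdot g=0$, then $f+g$ is itself an idempotent in $A(G)$: since $A(G)$ is commutative, $(f+g)^2=f^2+2fg+g^2=f+g$, using $f^2=f$, $g^2=g$ and $fg=0$. Because $I(G)=A(G)\cap I_B(G)$ consists precisely of all idempotent elements of $A(G)$ (an idempotent of $A(G)$ is automatically an idempotent of $B(G)$, hence lies in $I_B(G)$), we conclude $f+g\in I(G)$. Concretely, writing $f=1_{Y_1}$ and $g=1_{Y_2}$ with $Y_1,Y_2\in\Omoc(G)$ disjoint, we have $f+g=1_{Y_1\cup Y_2}$, and $Y_1\cup Y_2\in\Omoc(G)$ because $\Omoc(G)$ is a ring of sets.

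Next I would apply the hypothesis \eqref{eq:prop_T}. It gives that $Tf$, $Tg$ and $T(f+g)$ all lie in $I_B(H)$, so each is an idempotent of $B(H)$; moreover, by linearity $T(f+g)=Tf+Tg$. Writing $p=Tf$ and $q=Tg$, we therefore have $p^2=p$, $q^2=q$, and $(p+q)^2=p+q$. Expanding the last identity yields $pq+qp=0$. Since $B(H)$ is commutative under pointwise multiplication, $pq=qp$, so $2\,pq=0$ and hence $Tf\cdot Tg=pq=0$, as required.

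There is no serious obstacle here, as the argument is purely algebraic. The only points that require care are that $f+g$ genuinely returns an element of $I(G)$, so that the hypothesis on $T$ may be invoked for it, and that it is precisely the commutativity of $B(H)$ that converts the vanishing of the anticommutator $pq+qp$ into the vanishing of the product itself; this last step would fail for a noncommutative target.
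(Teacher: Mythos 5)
Your proof is correct and is essentially the same as the paper's: both arguments rest on the observation that $f+g$ is again an idempotent in $I(G)$, so that $Tf$, $Tg$, and $T(f+g)=Tf+Tg$ are all idempotents in $B(H)$, and expanding $(Tf+Tg)^2=Tf+Tg$ forces $Tf\cdot Tg=0$. Your write-up merely makes explicit the details the paper leaves implicit (the coset-ring description of $f+g$ and the use of commutativity of $B(H)$ to pass from $pq+qp=0$ to $pq=0$).
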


\begin{proof}
Let $f, g \in I(G)$ such that $f\cdot g= 0$. Then we have $(f+g)^2=f+g$. Thus $f+g \in I(G)$. By the assumption, $Tf$, $Tg$ and $T(f+g) \in I_B(H)$. Since we have $(T(f+g))^2=Tf+Tg$, we get $Tf \cdot Tg=0$.
\end{proof}

\begin{definition}
We define $\Phi: \Ag \to B(H)$ by 
\[
\Phi (f)= T \circ \varphi_G^{-1}(f)
\]
for any $f \in \Ag$,
where $\varphi_G$ is given in \eqref{eq:varphi_G}.
\end{definition}

Then $\Phi$ is a bounded complex linear operator from $\Ag$ into $B(H)$. In order to achieve our main result, we consider the dual map $\Phi^{*}: W^*(H) \to \vN(G/G_e)$ and have the following lemmas.

\begin{lemma}\label{support}
Let $\la \in \vN(\Ge)$ and $a \in \Ge$. Suppose that $a \in \supp \la$. Then for every neighbourhood $V$ of $a$ in $\Ge$, there exists $h \in I(\Ge)$ such that $\supp h \subset V$ and $\langle \la, h \rangle \neq 0$.
\end{lemma}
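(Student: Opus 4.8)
The plan is to read the hypothesis $a \in \supp\la$ through the standard definition of the support of an element of $\vN(\Ge)$, and then to \emph{upgrade} the test function it produces to an idempotent. Recall that $\vN(\Ge)$ is the dual of $A(\Ge)$ and that, by definition, $a \in \supp\la$ means precisely that $\la$ does not vanish on any open neighbourhood of $a$; that is, for every open $U \ni a$ there is some $u \in A(\Ge)$ with $\supp u \subseteq U$ and $\langle \la, u\rangle \neq 0$. Thus the content of the lemma is not to find \emph{a} test function supported in $V$ — that is immediate — but to arrange that it can be taken to be an indicator $1_Y \in I(\Ge)$.

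First I would use that $\Ge = G/G_e$ is totally disconnected: by van Dantzig's theorem the compact open subgroups form a neighbourhood basis of the identity, so there is a compact open subgroup $W$ of $\Ge$ with $C := aW \subseteq V$. Then $C$ is a compact open coset, $a \in C$, and $1_C \in I(\Ge)$. Applying the definition of support to the neighbourhood $U = C$ yields $u \in A(\Ge)$ with $\supp u \subseteq C$ and $\langle \la, u\rangle \neq 0$. The next step is to approximate $u$, within the subalgebra of functions supported on $C$, by finite linear combinations $v = \sum_{k} \alpha_k 1_{Y_k}$ of indicators of compact open cosets $Y_k \subseteq C$. Since $\langle \la, \cdot\rangle$ is norm-continuous on $A(\Ge)$, choosing $v$ close enough to $u$ forces $\langle \la, v\rangle \neq 0$, whence $\langle \la, 1_{Y_k}\rangle \neq 0$ for at least one $k$; the idempotent $h := 1_{Y_k}$ then satisfies $\supp h = Y_k \subseteq C \subseteq V$, as required.

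The heart of the argument — and the step I expect to require the most care — is the localised density used above: that $\spnn\{1_Y : Y \in \Omoc(\Ge),\ Y \subseteq C\}$ is dense in $\{u \in A(\Ge): \supp u \subseteq C\}$. I would obtain this from the density of the span of idempotents in the Fourier algebra of a totally disconnected group (the fact invoked in the proof of Proposition \ref{quotient}, see \cite{Fo}), together with two reductions. Writing $C = aW$, left translation by $a$ is an isometry of $A(\Ge)$ carrying compact open cosets to compact open cosets, so it suffices to treat $C = W$. For the compact open subgroup $W$, the functions in $A(\Ge)$ supported on the clopen set $W$ form a closed subalgebra isometrically isomorphic to $A(W)$; and since $W$ is compact and totally disconnected, the span of its idempotents — exactly the indicators of cosets of open subgroups of $W$, all of which are compact open cosets of $\Ge$ contained in $W$ — is dense in $A(W)$. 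Transporting back by translation by $a$ then gives the claim.
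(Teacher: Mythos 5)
Your proof is correct, and its overall skeleton matches the paper's: both use van Dantzig's theorem to produce a compact open coset $C = aW \subseteq V$ (the paper writes $aG_a$), both invoke the same characterisation of $\supp \la$ to obtain $g \in A(\Ge)$ with $\supp g \subseteq C$ and $\langle \la, g\rangle \neq 0$, and both finish by approximating $g$ by a linear combination of idempotents supported in $C$, so that linearity and norm-continuity of $\la$ force one of them to pair nontrivially with $\la$. The genuine difference is how the approximating idempotents are made to live inside $C$. The paper never formulates your localised density lemma: it approximates $\varphi_G^{-1}(g)$ by $\sum_i \alpha_i f_i$ with $f_i \in I(G)$ using only the \emph{global} density of $\spnn I(G)$ in $A_I(G)$ (Proposition \ref{quotient}), and then multiplies everything by the norm-one idempotent $1_C$; since $1_C g = g$ and multiplication by a norm-one element of a Banach algebra is contractive, the error $\|g - \sum_i \alpha_i 1_C\varphi_G(f_i)\|$ is no larger than the original one, while each $1_C\varphi_G(f_i)$ is again an idempotent supported in $C$. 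That trick is one line and entirely self-contained. Your route instead proves the localised density by translating $C$ to $W$ and identifying $\{u \in A(\Ge) : \supp u \subseteq W\}$ isometrically with $A(W)$; that identification (extension by zero from an open subgroup is an isometric embedding of Fourier algebras) is a true but nontrivial theorem, available in the paper's reference \cite{KL}, and you additionally apply Forrest's density theorem to $W$ rather than to $\Ge$. So your argument buys a reusable local statement --- density of $\spnn\{1_Y : Y \in \Omoc(\Ge),\ Y \subseteq C\}$ in the functions supported in $C$ --- at the price of importing heavier machinery, whereas the paper's multiplication trick extracts the same conclusion directly from what it has already established.

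One imprecision worth fixing: the idempotents of $A(W)$ are the indicators of elements of the coset ring $\Omo(W)$ (Host's theorem, as quoted in the paper's introduction), not only the indicators of single cosets of open subgroups of $W$. This does not damage your argument: the intersection of two cosets is again a coset or empty, so the span of coset indicators is an algebra and therefore contains the indicator of every set in the ring these cosets generate; hence the two spans coincide and the density you need still holds.
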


\begin{proof}
Since $\Ge$ is totally disconnected, every neighbourhood of the identity contains an open compact subgroup. As $a^{-1}V$ is a neighbourhood of the identity, there exists an open compact subgroup $G_{a}$ in $\Ge$ such that $G_{a} \subset a^{-1}V$. Thus $aG_{a} \subset V$. Since $aG_{a}$ is a compact open coset in $\Ge$, we have that $1_{aG_{a}} \in \Ag$ is an idempotent with norm $1$. Since $a \in \supp \la$, there is $g \in \Ag$ such that $\supp g \subset aG_{a}$ and $\langle \la, g \rangle \neq 0$. Put $\delta= |\langle \la, g \rangle|$. As $\varphi_G^{-1}(g) \in A_{I}(G)$, there are $\alpha_i \in \C$ and $f_i \in I(G)$ such that $\|\varphi_G^{-1}(g)-\sum^{n}_{i=1}\alpha_if_i\| < \delta / \|\la\|$ for some $n\in \N$. Since $\varphi_G$ is an isometric isomorphism, we have $\|g-\sum^{n}_{i=1}\alpha_i\varphi_G(f_i)\| < \delta / \|\la\| $ and $\varphi_G(f_i) \in I(\Ge)$. Then we obtain 
\begin{align*}
1_{aG_{a}}(g-\sum^{n}_{i=1}\alpha_i\varphi_G(f_i))=g-\sum^{n}_{i=1}\alpha_i1_{aG_{a}}\varphi_G(f_i),
\end{align*}
thus,
\begin{align*}
\|g-\sum^{n}_{i=1}\alpha_i1_{a G_{a}}\varphi_G(f_i)\| \le  \|g-\sum^{n}_{i=1}\alpha_i\varphi_G(f_i)\| <\frac{\delta}{\|\la\|}.
\end{align*}

Suppose for every $1 \le i \le n$, we have $\langle \la, 1_{aG_{a}}\varphi_G(f_i) \rangle= 0$. Then 
\begin{equation*}
    \begin{split}
      |\langle \la, g \rangle| &= |\langle \la, g \rangle - \sum^{n}_{i=1}\alpha_i \langle \la, 1_{aG_{a}}\varphi_G(f_i) \rangle | \\
      &= |\langle \la, (g-\sum^{n}_{i=1}\alpha_i 1_{aG_{a}}\varphi_G(f_i) )\rangle | \\
      & \le \|\la\| \|g-\sum^{n}_{i=1}\alpha_i 1_{aG_{a}}\varphi_G(f_i) \| \\
      &< \|\la\| \frac{\delta}{\|\la\|}=\delta.
    \end{split}
\end{equation*}
This implies that $|\langle \la, g \rangle|< \delta$, which is a contradiction. Therefore, there is an $i_0 \in \{1, \cdots, n \}$ such that 
\[
\langle \la, 1_{aG_{a}}\varphi_G(f_{i_0}) \rangle \neq 0.
\]
We also have $\supp(1_{aG_{a}}\varphi_G(f_{i_0})) \subset V$ and $1_{aG_{a}}\varphi_G(f_{i_0}) \in I(\Ge)$, the proof is thus completed.
\end{proof}

\begin{proposition}\label{existance of psi}
For any $a \in H$, there exist uniquely $b \in G/G_e$ and $\alpha \in \C$ such that $\Phi^{*}(\omega_{H}(a))=\alpha \lambda_{\Ge}(b)$. 
\end{proposition}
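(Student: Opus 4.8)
The plan is to analyse the normal functional on $\Ag$ determined by $\mu := \Phi^{*}(\omega_H(a)) \in \vN(\Ge)$ and to show that its support consists of at most one point, which then forces $\mu$ to be a scalar multiple of a point mass $\la_{\Ge}(b)$. First I would record the basic duality: for every $f \in \Ag$ one has $\langle \mu, f\rangle = \langle \omega_H(a), \Phi(f)\rangle = \Phi(f)(a)$, since $\omega_H(a)$ acts on $B(H)$ by evaluation at $a$. Because $\varphi_G^{-1}$ is an isometric algebra isomorphism carrying $I(\Ge)$ onto $I(G)$ and $T$ satisfies \eqref{eq:prop_T}, the map $\Phi = T\circ\varphi_G^{-1}$ sends each idempotent $1_Y \in I(\Ge)$ to some $1_F \in I_B(H)$ with $F \in \Omo(H)$; moreover Lemma \ref{idempotentproduct} (applied through $\varphi_G^{-1}$, which preserves products) shows $\Phi$ preserves disjointness of idempotents. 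Consequently $\langle \mu, 1_Y\rangle = 1_F(a) \in \{0,1\}$ for every $Y \in \Omoc(\Ge)$.

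Next I would show $\supp\mu$ contains at most one point. Suppose $b_1 \neq b_2$ both lie in $\supp\mu$. As $\Ge$ is Hausdorff, choose disjoint open neighbourhoods $V_1, V_2$ of $b_1, b_2$. By Lemma \ref{support} there exist $h_i \in I(\Ge)$ with $\supp h_i \subset V_i$ and $\langle\mu, h_i\rangle \neq 0$, hence $\langle\mu, h_i\rangle = 1$ by the previous step. Disjointness of the supports gives $h_1 h_2 = 0$, so $\Phi(h_1)\Phi(h_2)=0$; writing $\Phi(h_i)=1_{F_i}$ this means $F_1 \cap F_2 = \emptyset$. On the other hand $1_{F_i}(a) = 1$ forces $a \in F_1 \cap F_2$, a contradiction. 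Thus $\supp\mu$ is empty or a single point $\{b\}$.

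The heart of the argument, and the step I expect to be the main obstacle, is to upgrade ``$\supp\mu = \{b\}$'' to ``$\mu = \alpha\la_{\Ge}(b)$'': a priori an element supported at a point could detect more than the value there, and ruling this out is exactly where total disconnectedness of $\Ge$ enters. The key claim is that $\langle\mu, g\rangle = 0$ whenever $g(b)=0$. To prove it I would use that the span of $I(\Ge)$ is dense in $\Ag$ \cite[Theorem 5.3]{Fo} to approximate $g$ by locally constant $h_n \in \Ag$; since $\|\cdot\|_\infty \le \|\cdot\|_{A}$ we get $h_n(b)\to g(b)=0$. Each $h_n$, being a finite combination of indicators of compact open cosets, is constant on $bG_{0,n}$ for some compact open subgroup $G_{0,n}$, so $h_n - h_n(b)\,1_{bG_{0,n}}$ vanishes on a neighbourhood of $b$ and can be written as a finite combination of indicators of compact open cosets avoiding $b$; by the definition of support (regularity of $\Ag$) each such indicator has support disjoint from $\supp\mu=\{b\}$ and pairs to $0$ with $\mu$. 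Hence $\langle\mu, h_n\rangle = h_n(b)\langle\mu, 1_{bG_{0,n}}\rangle \to 0$, the factor $\langle\mu,1_{bG_{0,n}}\rangle$ being bounded by $\|\mu\|$, and so $\langle\mu,g\rangle=0$.

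Finally, set $\alpha := \langle\mu, 1_{bG_0}\rangle$ for any compact open subgroup $G_0$; this is independent of $G_0$ by the claim, since $1_{bG_0}-1_{bG_0'}$ vanishes at $b$. For arbitrary $f \in \Ag$ the function $f - f(b)1_{bG_0}$ vanishes at $b$, whence $\langle\mu, f\rangle = \alpha f(b) = \langle \alpha\la_{\Ge}(b), f\rangle$; as $\Ag$ is the predual of $\vN(\Ge)$ this yields $\mu = \alpha\la_{\Ge}(b)$. If instead $\supp\mu = \emptyset$ then $\mu = 0$ and one takes $\alpha = 0$. Uniqueness when $\mu\neq 0$ follows from the linear independence of $\{\la_{\Ge}(b)\}_{b\in \Ge}$: if $\alpha\la_{\Ge}(b) = \alpha'\la_{\Ge}(b')$ with $\alpha,\alpha'\neq0$, choosing $f\in\Ag$ with $f(b)=1$ and $f(b')=0$ (regularity of $\Ag$) forces $b=b'$ and then $\alpha=\alpha'$.
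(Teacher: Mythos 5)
Your proof is correct, and its first half is essentially the paper's own argument: you use Lemma \ref{support} to produce idempotents supported in disjoint neighbourhoods of two putative support points and Lemma \ref{idempotentproduct} to derive a contradiction, exactly as the paper does (your phrasing via $a \in F_1 \cap F_2 = \emptyset$ is the same computation in indicator-function form). Where you genuinely diverge is the last step: the paper passes from $\supp\mu = \{b\}$ to $\mu = \alpha\la_{\Ge}(b)$ by citing \cite[Corollary 2.5.9]{KL}, a black-box fact valid for arbitrary locally compact groups, whereas you prove this implication by hand using total disconnectedness of $\Ge$ --- approximating by locally constant elements of $\spnn I(\Ge)$, splitting off the constant value on a small coset $bG_{0,n}$, and using that elements of $A(\Ge)$ supported away from $\supp\mu$ pair to zero. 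This buys self-containedness (the only external input is Forrest's density theorem, already used elsewhere in the paper) at the cost of an argument that works only in the totally disconnected setting, which is all that is needed here. Two small remarks. First, your claim that $h_n - h_n(b)1_{bG_{0,n}}$ is a combination of indicators of compact open \emph{cosets} avoiding $b$ is slightly imprecise: the sets $Y_k \setminus bG_{0,n}$ lie in the ring $\Omoc(\Ge)$ but need not be cosets; this is harmless, since all you use is that their supports miss $\supp\mu$. Second, your explicit treatment of the degenerate case $\mu = 0$ (where only existence, not uniqueness of $b$, survives) is actually more careful than the paper, which asserts $\supp(\Phi^{*}(\omega_{H}(a))) \neq \emptyset$ without justification; that assertion can fail, e.g.\ for $T = 0$, so restricting the uniqueness claim to $\mu \neq 0$ as you do is the honest formulation.
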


\begin{proof}
Suppose there were $b_1, b_2 \in \Ge$ such that $b_1, b_2$ were both in $\supp(\Phi^{*}(\omega_{H}(a)))$. Since $G_e$ is a closed subgroup of $G$, the quotient group $\Ge$ is  Hausdorff. Thus, there are neighbourhoods $V_{b_1}$ and $V_{b_2}$ of $b_1$ and $b_2$, respectively, in $\Ge$ such that $V_{b_1} \cap V_{b_2}= \emptyset$. By Lemma \ref{support}, there are $h_i \in I(\Ge)$, for $i=1,2$, such that $\supp h_i \subset V_{b_i}$ and $\langle \Phi^{*}(\omega_{H}(a)), h_i \rangle \neq 0$. As $V_{b_1} \cap V_{b_2}= \emptyset$, we get $h_1h_2=0$. Since $\varphi_G$ is an isomorphism, we have $\varphi_G^{-1}(h_i) \in I(G)$, for $i= 1, 2$, and $\varphi_G^{-1}(h_1)\cdot \varphi_G^{-1}(h_2)=\varphi_G^{-1}(h_1 h_2)=0$. By Lemma \ref{idempotentproduct}, we have $T(\varphi_G^{-1}(h_1))\cdot T(\varphi_G^{-1}(h_2)) = 0$. On the other hand, we obtain
\begin{multline*}
 0 \neq \langle \Phi^{*}(\omega_{H}(a)), h_1 \rangle=\Phi(h_1)(a)= T \circ \varphi_G^{-1}(h_1)(a)= T(\varphi_G^{-1}(h_1))(a),   
\end{multline*}
and 
\begin{multline*}
0 \neq \langle \Phi^{*}(\omega_{H}(a)), h_2 \rangle=\Phi(h_2)(a)=T \circ \varphi_G^{-1}(h_2)(a)= T(\varphi_G^{-1}(h_2))(a).
\end{multline*}
Therefore, $$T(\varphi_G^{-1}(h_1))\cdot T(\varphi_G^{-1}(h_2))\neq 0,$$ this is a contradiction. Since $\supp (\Phi^{*}(\omega_{H}(a))) \neq \emptyset$, there is uniquely $b \in \Ge$ such that $\supp (\Phi^{*}(\omega_{H}(a)))=\{b\}$. Consequently, by \cite[Corollary 2.5.9]{KL}, there is an $\alpha \in \C$ such that $\Phi^{*}(\omega_{H}(a))=\alpha \lambda_{\Ge}(b)$.
\end{proof}

For any $a \in H$, by Proposition \ref{existance of psi}, there are unique $b \in G/G_e$ and $\alpha \in \C$ such that $\Phi^{*}(\omega_{H}(a))=\alpha \lambda_{\Ge}(b)$, thus we have 
\[
\Phi(f)(a)=\alpha f(b),
\]
for any $f \in \Ag$. We define $\phi: H \to \C$ by $\alpha=\phi(a)$. We also define $\psi: H \to \Ge$ by $b=\psi(a)$. Then we get 
\begin{equation}\label{formofPhi}
      \Phi(f)(a)=\phi(a)f(\psi(a)),
\end{equation}
for any $f \in \Ag$ and $a \in H$.

For any $h \in I(G)$, since we have $\Phi(\varphi_G(h))=T(h)\in I_B(H)$, we obtain that 
\begin{equation*}
    \begin{split}
        (\Phi(\varphi_G(h)))^2&=T(h)T(h)=T(h)=\Phi(\varphi_G(h)).\\
    \end{split}
\end{equation*}
On the other hand, since $(\varphi_G(h))^2=\varphi_G(h^2)=\varphi_G(h)$ in $A(\Ge)$, we obtain that
\begin{equation}\label{idempotentcondition}
    \phi(a)^2\varphi_G(h)(\psi(a))=\phi(a)\varphi_G(h)(\psi(a))
\end{equation}
for any $h \in I(G)$ and $a \in H$.
\begin{lemma}\label{existenceofidempotent}
For any $a \in H$, there is an idempotent $1_{\psi(a)G_0}$ of $A(\Ge)$ where $\psi(a)G_0$ is an open compact neighbourhood of $\psi(a)$.
\end{lemma}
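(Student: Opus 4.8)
The plan is to build the required idempotent directly from the topological structure of $\Ge$, since the statement is really a packaging of two facts already in play. Recall that we are working under the standing assumption that $A(G)$ has non-zero idempotents, so $G_e$ is a compact normal subgroup and $\Ge$ is a totally disconnected locally compact group. By van Dantzig's theorem---the very fact already invoked in the proof of Lemma \ref{support}---every neighbourhood of the identity of $\Ge$ contains an open compact subgroup. In particular, $\Ge$ possesses at least one open compact subgroup, which I would call $G_0$.

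Next I would form the left coset $\psi(a)G_0$ and verify that it has the stated properties. Since left translation by $\psi(a)$ is a homeomorphism of $\Ge$, the set $\psi(a)G_0$ is simultaneously open and compact; and because $e\in G_0$, we have $\psi(a)\in\psi(a)G_0$, so $\psi(a)G_0$ is genuinely an open compact neighbourhood of $\psi(a)$. Being a compact open coset of the subgroup $G_0$, it belongs to $\Omoc(\Ge)$.

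Finally, applying the description $A(G)\cap I_B(G)=\{1_Y: Y\in\Omoc(G)\}$ to the group $\Ge$, the indicator $1_{\psi(a)G_0}$ is an idempotent of $\Ag$ (indeed of norm one, by the Ilie--Spronk result, since $\psi(a)G_0$ is an open coset), which is exactly the asserted $1_{\psi(a)G_0}\in I(\Ge)$. I do not expect any genuine obstacle: the only point needing a word of justification is that translating a fixed open compact subgroup by $\psi(a)$ keeps us inside $\Omoc(\Ge)$, and this is immediate because translation preserves openness, compactness, and the coset structure. The lemma is thus simply the conjunction of van Dantzig's theorem with the identification of idempotents of $A(\Ge)$ as indicators of elements of $\Omoc(\Ge)$.
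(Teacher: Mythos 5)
Your proof is correct and follows essentially the same route as the paper: obtain an open compact subgroup $G_0$ of $\Ge$ from total disconnectedness (van Dantzig), translate it to the compact open coset $\psi(a)G_0$, and identify $1_{\psi(a)G_0}$ as a norm-one idempotent of $\Ag$. The extra details you supply (translation being a homeomorphism, membership in $\Omoc(\Ge)$) are exactly the steps the paper leaves implicit.
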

\begin{proof}
As $\Ge$ is totally disconnected, there is an open compact subgroup $G_0$ in $\Ge$.  For any $\psi(a) \in \Ge$, $\psi(a)G_0$ is a compact open coset in $\Ge$, hence $1_{\psi(a)G_0}$ is an idempotent of $A(\Ge)$ with norm $1$. 
\end{proof}

\begin{lemma}\label{algebrahomo}
The map $\Phi: A(\Ge) \to B(H)$ is an algebraic homomorphism. 
\end{lemma}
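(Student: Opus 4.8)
The plan is to show that $\Phi(fg) = \Phi(f)\Phi(g)$ for all $f, g \in A(\Ge)$. Since $\Phi$ is already bounded and linear, and since the span of idempotents $I(\Ge)$ is dense in $A(\Ge)$ (because $\Ge$ is totally disconnected, by \cite[Theorem 5.3]{Fo}), it suffices by continuity to verify multiplicativity on idempotents, and in fact it is cleanest to work pointwise through the representation \eqref{formofPhi}, namely $\Phi(f)(a) = \phi(a) f(\psi(a))$.

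First I would use \eqref{idempotentcondition}, which reads $\phi(a)^2 \varphi_G(h)(\psi(a)) = \phi(a)\varphi_G(h)(\psi(a))$ for every $h \in I(G)$ and $a \in H$. Fixing $a \in H$, I would invoke Lemma \ref{existenceofidempotent} to produce the idempotent $1_{\psi(a)G_0} \in A(\Ge)$ supported on a compact open coset containing $\psi(a)$; pulling back, $\varphi_G^{-1}(1_{\psi(a)G_0}) \in I(G)$, and evaluating \eqref{idempotentcondition} at this particular idempotent gives $1_{\psi(a)G_0}(\psi(a)) = 1$, so that the equation collapses to $\phi(a)^2 = \phi(a)$. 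Hence for each $a \in H$ we have $\phi(a) \in \{0,1\}$; that is, $\phi$ is the indicator function of some subset of $H$. This dichotomy on $\phi$ is the engine that makes multiplicativity work.

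With $\phi(a) \in \{0, 1\}$ in hand, the multiplicativity is a direct pointwise computation: for arbitrary $f, g \in A(\Ge)$ and $a \in H$,
\begin{align*}
\Phi(fg)(a) &= \phi(a) (fg)(\psi(a)) = \phi(a) f(\psi(a)) g(\psi(a)),\\
\Phi(f)(a)\Phi(g)(a) &= \phi(a)^2 f(\psi(a)) g(\psi(a)).
\end{align*}
Since $\phi(a) = \phi(a)^2$ for every $a$, the two right-hand sides agree, so $\Phi(fg)(a) = (\Phi(f)\Phi(g))(a)$ for all $a \in H$, giving $\Phi(fg) = \Phi(f)\Phi(g)$ as elements of $B(H)$. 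Together with the linearity of $\Phi$ this establishes that $\Phi$ is an algebraic homomorphism.

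The main obstacle is really the first step: securing that the representation \eqref{formofPhi} holds on \emph{all} of $A(\Ge)$, not merely on idempotents, and confirming that the idempotent condition \eqref{idempotentcondition} genuinely forces $\phi(a)^2 = \phi(a)$ for \emph{every} $a \in H$ rather than only where $\varphi_G(h)(\psi(a)) \neq 0$. Here the care is that one must choose, for each fixed $a$, an idempotent whose value at $\psi(a)$ is nonzero (equivalently equals $1$), which is exactly what Lemma \ref{existenceofidempotent} guarantees by exhibiting a compact open coset through $\psi(a)$; this lets us cancel the common nonzero factor $\varphi_G(h)(\psi(a)) = 1$ in \eqref{idempotentcondition}. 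Once $\phi$ is pinned down to be two-valued, the remainder is the routine pointwise check above, and no approximation argument is even needed since \eqref{formofPhi} holds for all $f \in A(\Ge)$ directly.
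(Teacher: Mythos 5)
Your proposal is correct and follows essentially the same route as the paper's own proof: you use Lemma \ref{existenceofidempotent} to produce the idempotent $1_{\psi(a)G_0}$ with value $1$ at $\psi(a)$, pull it back through $\varphi_G^{-1}$ into $I(G)$, apply \eqref{idempotentcondition} to get $\phi(a)^2=\phi(a)$ for every $a\in H$, and then verify multiplicativity pointwise via \eqref{formofPhi}. The opening remark about density of the span of idempotents is superfluous (as you yourself note at the end), but it does not affect the argument.
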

\begin{proof}
Let $a \in H$. By Lemma \ref{existenceofidempotent}, there is an idempotent $1_{\psi(a)G_0}$ of $A(\Ge)$.
Since $\varphi_G: A_I(G) \to A(\Ge)$ is surjective, there is $f \in A_I(G)$ such that $\varphi_G(f)=1_{\psi(a)G_0}$. Moreover, we have that $f^2=(\varphi_G^{-1}(1_{\psi(a)G_0}))^2=\varphi_G^{-1}(1_{\psi(a)G_0})=f$, this implies that $f \in I(G)$. Thus by (\ref{idempotentcondition}), we have 
\begin{multline*}
    \phi(a)^2=\phi(a)^2 1_{\psi(a)G_0}(\psi(a))=\phi(a)^2\varphi_G(f)(\psi(a))\\=\phi(a)\varphi_G(f)(\psi(a))=\phi(a)1_{\psi(a)G_0}(\psi(a))=\phi(a).
\end{multline*}
Since $a \in H$ is arbitrary, we have
\begin{equation}\label{square}
    \phi^2=\phi
\end{equation}
on $H$, thus we get $\phi: H \to \{0,1\}$. 
In addition, for any $f, g \in A(\Ge)$ and $a \in H$, we have
\begin{multline*}
    \Phi(fg)(a)=\phi(a)(fg)(\psi(a))=\phi(a)^2(fg)(\psi(a))\\
    =\phi(a)f(\psi(a))\phi(a)g(\psi(a))=(\Phi(f)\Phi(g))(a).
\end{multline*}
Hence $\Phi$ is an algebraic homomorphism from $A(\Ge)$ into $B(H)$. 
\end{proof}

\begin{lemma}\label{psicontinuous}
The map $\psi:\phi^{-1}(1) \to \Ge$ is continuous. 
\end{lemma}
\begin{proof}
For any $a_0 \in \phi^{-1}(1) \subset H$, let $U$ be an open neighbourhood of $\psi(a_0)$ in $\Ge$. Then there is $f_0 \in \Ag$ such that
\[
f_0(\psi(a_0))=1 \quad \text{and} \quad  f_0(b)=0 \, \, \text{ for } b \in (\Ge) \setminus U.
\]
Let $(a_\lambda)_\lambda \subseteq \phi^{-1}(1)$ be a net such that $a_{\lambda} \to a_0$. As $\Phi(f_0) \in B(H)$, $\Phi f_0(a_{\lambda}) \to \Phi f_0(a_{0})=f_0(\psi(a_0))=1$. There is an $\lambda_0 $ such that if $\lambda \ge \lambda_0$ then $|\Phi f_0(a_{\lambda})|>\frac{1}{2}$. Since $\Phi f_0(a_{\lambda})= f_0(\psi(a_{\lambda}))$, we have $\psi(a_{\lambda}) \in U$ provided $\lambda \ge \lambda_0$. Thus $\psi$ is continuous on $\phi^{-1}(1)$. 
\end{proof}

\begin{lemma}\label{phiopen}
The set $\phi^{-1}(1)$ is an open subset of $H$. 
\end{lemma}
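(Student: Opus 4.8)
The plan is to show that every point of $\phi^{-1}(1)$ admits an open neighbourhood on which $\phi$ is identically $1$. The natural candidate for such a neighbourhood is the nonvanishing set of the image under $\Phi$ of a suitable idempotent peaked at $\psi(a_0)$, so the whole argument rests on transporting the openness of that set back through the multiplicative formula \eqref{formofPhi}.

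First I would fix $a_0 \in \phi^{-1}(1)$ and invoke Lemma \ref{existenceofidempotent} to obtain the idempotent $f_0 := 1_{\psi(a_0)G_0} \in \Ag$, which satisfies $f_0(\psi(a_0)) = 1$. Setting $h := \varphi_G^{-1}(f_0) \in A_I(G)$ as in the proof of Lemma \ref{algebrahomo}, one checks $h^2 = \varphi_G^{-1}(f_0^2) = \varphi_G^{-1}(f_0) = h$, so $h \in I(G)$ and hence $\Phi(f_0) = T(h) \in I_B(H)$ by \eqref{eq:prop_T}. Consequently $\Phi(f_0) = 1_F$ for some $F \in \Omo(H)$; in particular $\Phi(f_0)$ takes only the values $0$ and $1$, and since $\Phi(f_0) \in B(H) \subseteq C_b(H)$ it is continuous on $H$.

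The key step is then to realise $F = \{a \in H : \Phi(f_0)(a) \neq 0\}$, which is open as the preimage of $\C \setminus \{0\}$ under the continuous function $\Phi(f_0)$, and to observe that $a_0 \in F$ because \eqref{formofPhi} gives $\Phi(f_0)(a_0) = \phi(a_0) f_0(\psi(a_0)) = 1$. Finally, for any $a \in F$ the identity $\Phi(f_0)(a) = \phi(a) f_0(\psi(a)) = 1$, combined with $\phi(a) \in \{0,1\}$ (from \eqref{square}) and $f_0(\psi(a)) \in \{0,1\}$, forces $\phi(a) = 1$. Hence $F \subseteq \phi^{-1}(1)$, and $F$ is the desired open neighbourhood of $a_0$.

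I do not expect a serious obstacle: the substantive content is simply that an idempotent in $B(H)$ is a continuous, $\{0,1\}$-valued function, so its nonvanishing set is open, after which the multiplicative form of $\Phi$ transfers this openness to $\phi^{-1}(1)$. The only points needing care are verifying that $h = \varphi_G^{-1}(f_0)$ genuinely lies in $I(G)$ (so that \eqref{eq:prop_T} applies and $\Phi(f_0)$ is an idempotent), and that \emph{both} factors of the product $\phi(a)f_0(\psi(a))$ are $\{0,1\}$-valued, since it is precisely the absence of any cancellation that pins down $\phi \equiv 1$ on the whole of $F$ rather than merely at $a_0$.
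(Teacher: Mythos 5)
Your proof is correct and follows essentially the same route as the paper: fix $a_0 \in \phi^{-1}(1)$, take the idempotent $1_{\psi(a_0)G_0}$ from Lemma \ref{existenceofidempotent}, pull it back to an element of $I(G)$ via $\varphi_G^{-1}$, and exploit continuity of its image under $\Phi$ together with $\{0,1\}$-valuedness and the formula \eqref{formofPhi}. The only cosmetic difference is that the paper extracts the neighbourhood by a local $\tfrac{1}{2}$-continuity estimate at $a_0$, whereas you take the entire (open) nonvanishing set $F$ of the idempotent $\Phi(1_{\psi(a_0)G_0}) = 1_F$; both arguments are sound.
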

\begin{proof}
Let $a \in \phi^{-1}(1)$ be arbitrary. By Lemma \ref{existenceofidempotent}, there is an idempotent $1_{\psi(a)G_0}$ of $A(\Ge)$ where $\psi(a)G_0$ is an open compact neighbourhood of $\psi(a)$. Since $\Phi(1_{\psi(a)G_0}) \in B(H) \subset C_b(H)$, there exists an open neighbourhood $V$ of $a$ in $H$ such that if $b \in V$ then 
\[
|\Phi(1_{\psi(a)G_0})(a)-\Phi(1_{\psi(a)G_0})(b)|\le \frac{1}{2}.
\]
We have 
\[
|1-\phi(b)1_{\psi(a)G_0}(\psi(b))|=|\Phi(1_{\psi(a)G_0})(a)-\Phi(1_{\psi(a)G_0})(b)| \le \frac{1}{2}.
\]
Since either $\phi(b)1_{\psi(a)G_0}(\psi(b))= 1$ or $\phi(b)1_{\psi(a)G_0}(\psi(b))= 0$, this implies that 
\[
\phi(b)1_{\psi(a)G_0}(\psi(b))=1.
\]
Hence we have $\phi(b)=1$ for any $b \in V$. Thus $V \subset \phi^{-1}(1)$. It follows that $\phi^{-1}(1)$ is an open subset of $H$.
\end{proof}

\begin{theorem}\label{A(G)toB(H)}
Let $G$ and $H$ be two locally compact Hausdorff groups, and $T:A(G) \to B(H)$ be a bounded complex linear operator. Suppose $T$ satisfies that $T(I(G)) \subset I_B(H)$. Then 
there are an open subset $U$ of $H$ and a continuous map $\psi$ from $U$ into $\Ge$ such that 
\begin{equation}\label{form of T}
   Tf(a)=
\begin{cases}
     f([\psi(a)]_{G_e}) \quad & \text{if $a \in U$,} \\
    0                 & \text{if $ a \in H \setminus U$,}
  \end{cases}
\end{equation}
for any $f \in A_I(G)$ and $a \in H$. 
\end{theorem}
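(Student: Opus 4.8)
The plan is simply to assemble the structural facts already established. First I would clear away the trivial case: if $G_e$ is not compact then $A_I(G)=\{0\}$ by Proposition~\ref{quotient}, so $T$ vanishes on $A_I(G)$ and \eqref{form of T} holds with $U=\emptyset$. Hence I may assume $G_e$ is compact, so that the isometric isomorphism $\varphi_G\colon A_I(G)\to\Ag$ of \eqref{eq:varphi_G} is available and the map $\Phi=T\circ\varphi_G^{-1}$ is defined.

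The crucial identity is that $Tf=\Phi(\varphi_G(f))$ for every $f\in A_I(G)$, directly from the definition of $\Phi$. Substituting $g=\varphi_G(f)\in\Ag$ into the representation \eqref{formofPhi} obtained from Proposition~\ref{existance of psi} gives
\begin{equation*}
    Tf(a)=\Phi(\varphi_G(f))(a)=\phi(a)\,\varphi_G(f)(\psi(a))
\end{equation*}
for all $a\in H$. I would then take $U:=\phi^{-1}(1)$; this is open in $H$ by Lemma~\ref{phiopen}, and the restriction of $\psi$ to it is continuous by Lemma~\ref{psicontinuous}, which furnishes the open set and continuous map required by the statement.

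Next I would evaluate the two cases, using that $\phi$ takes only the values $0$ and $1$ by \eqref{square}. For $a\in H\setminus U$ one has $\phi(a)=0$, whence $Tf(a)=0$. For $a\in U$ one has $\phi(a)=1$, so $Tf(a)=\varphi_G(f)(\psi(a))$. The one step needing a little care is the passage back from $\Ge$ to $G$: by construction of the representative map, $[\psi(a)]_{G_e}\in S\subset G$ satisfies $q_G([\psi(a)]_{G_e})=\psi(a)$, so the defining relation $\varphi_G(f)(q_G(c))=f(c)$ of \eqref{eq:varphi_G} yields $\varphi_G(f)(\psi(a))=f([\psi(a)]_{G_e})$. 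This establishes \eqref{form of T} on $U$ and completes the argument.

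I do not anticipate a genuine obstacle at this stage, since all of the analytic work has been carried out in the preceding results: the disjointness-preserving property (Lemma~\ref{idempotentproduct}), the support and uniqueness analysis giving $\Phi^{*}(\omega_H(a))=\alpha\lambda_{\Ge}(b)$ (Lemma~\ref{support} and Proposition~\ref{existance of psi}), the idempotent relation $\phi^{2}=\phi$ (Lemma~\ref{algebrahomo}), and the topological Lemmas~\ref{psicontinuous} and~\ref{phiopen}. The only new ingredient is the bookkeeping with the fixed section $[\,\cdot\,]_{G_e}$ of $q_G$, which merely rewrites the quotient-side formula \eqref{formofPhi} as a formula on $G$.
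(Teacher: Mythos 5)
Your proposal is correct and follows essentially the same route as the paper: take $U=\phi^{-1}(1)$, invoke Lemma~\ref{phiopen} for openness and Lemma~\ref{psicontinuous} for continuity, substitute $\varphi_G(f)$ into \eqref{formofPhi}, and use $\phi^2=\phi$ together with the section $[\,\cdot\,]_{G_e}$ to rewrite the quotient-side formula on $G$. Your explicit handling of the degenerate case $A_I(G)=\{0\}$ mirrors the reduction the paper performs in the setup preceding the theorem, so there is no substantive difference.
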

\begin{proof}
Let $U=\phi^{-1}(1)$. By Lemma \ref{phiopen}, $U$ is an open subset of $H$. Moreover, Lemma \ref{psicontinuous} shows that $\psi: U \to \Ge$ is a continuous map. 
Applying (\ref{formofPhi}), for any $f \in A_I(G)$ and $a\in H$, we have 
\[
Tf(a)=\Phi(\varphi_G(f))(a)=\phi(a)\varphi_G(f)(\psi(a)).
\]
Thus, we get 
 \[
  \begin{split}
Tf(a)&=
\begin{cases}
    \varphi_G(f)(\psi(a)) \quad & \text{if $a \in U$,} \\
    0                 & \text{if $ a \in H \setminus U$}
  \end{cases}\\
  &=
  \begin{cases}
    f([\psi(a)]_{G_e}) \quad & \text{if $a \in U$,} \\
    0                 & \text{if $ a \in H \setminus U$,}
  \end{cases}\\
  \end{split}
  \]
  for any $f \in A_I(G)$ and any $a \in H$. 
\end{proof}

The following example shows that the assumption in Theorem \ref{A(G)toB(H)} does not imply  $T(I(G))\subset I(H)$. This observation is in line with the well known fact that $f\circ \psi$ may not be in the Fourier algebra $A(H)$ in general (see Remark \ref{re:not in AH}). 
\begin{example}
Let $G=\{0\}$ be the trivial group. Then we define a bounded linear operator $T:A(G) \to B(\mathbb{Z})$ by $T(1_{G})=1_\mathbb{Z}$. Then it satisfies $T(I(G)) \subset I_B(H)$. Note that in this case, we have $U=\mathbb{Z}$ and the continuous map $\psi: \mathbb{Z} \to \Ge$ is $\psi(n)=0$ for any $n \in \mathbb{Z}$. On the other hand, since $1_\mathbb{Z} \notin A(\mathbb{Z})$, we have $T(I(G))\nsubseteq I(H)$. 
\end{example}

\begin{remark}\label{re:not in AH}
In general the converse statement of above theorem may not hold since we do not know if $Tf \in A(H)$ for any $f \in A_I(G)$, even $T$ has a representation of the form (\ref{form of T}). If we only have $\psi: U \subseteq H \to G$ being continuous, then $f\mapsto f\circ \psi$ maps $A(G)$ into $\ell^{\infty}(H)$ in general. 
For abelian groups $G$ and $H$, Cohen \cite{Co} showed that $f\mapsto f\circ \psi$ maps $A(G)$ to $B(H)$ if and only if $\psi$ is a continuous piecewise affine map from a set in the open coset ring of $H$ into $G$. 
This characterisation was extended by Host \cite{Host} to the case when $G$ has an abelian subgroup of finite index and $H$ is arbitrary, and by \cite{LePham} to general groups. 
\end{remark}

Under extra assumptions on $T$, we obtain  algebraic structures for the open set $U$ and  algebraic properties on the map $\psi$. Let us first recall positive operators on the Fourier algebra.

A bounded linear operator $T:A_I(G) \to B(H)$ is said to be positive if $T(u)$ is positive definite whenever $u \in A_I(G)$ is a positive definite function.

\begin{corollary}
Let $G$ and $H$ be two locally compact groups. Let $T:A_I(G) \to B(H)$ be a positive bounded complex linear operator. If $T$ satisfies that $T(I(G)) \subset I_B(H)$ then there exists an open subgroup $U$ of $H$ and a continuous group homomorphism or anti-homomorphism $\psi$ from the open subgroup $U$ of $H$ into $\Ge$ such that 
\[
Tf(a)=
\begin{cases}
     f([\psi(a)]_{G_e}) \quad & \text{if $a \in U$,}\\
    0                 & \text{if $ a \in H \setminus U$,}
  \end{cases}
  \]
  for any $f \in A_I(G)$ and $a \in G$.
\end{corollary}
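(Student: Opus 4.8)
The plan is to use positivity to promote the open set $U=\phi^{-1}(1)$ from Theorem \ref{A(G)toB(H)} to an open subgroup of $H$, and the continuous map $\psi$ to a group homomorphism or anti-homomorphism. Write $N:=\Ge$. First I would transfer positivity from $T$ to $\Phi=T\circ\varphi_G^{-1}$. Since $G_e$ is compact, the isometric isomorphism $\varphi_G$ induced by the quotient map carries positive definite functions in $A_I(G)$ to positive definite functions in $A(N)$; hence $\Phi\colon A(N)\to B(H)$ is a positive algebra homomorphism, the homomorphism property being Lemma \ref{algebrahomo}, and a positive homomorphism is automatically $*$-preserving. Dually, $\Phi^{*}\colon W^{*}(H)\to\vN(N)$ is a normal positive map which, by Proposition \ref{existance of psi}, satisfies $\Phi^{*}(\omega_H(a))=\phi(a)\lambda_N(\psi(a))$ for every $a\in H$.

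Next I would locate the identity. As $N$ is totally disconnected, van Dantzig's theorem supplies a neighbourhood basis at $e_N$ of compact open subgroups $N_0$, and each $1_{N_0}\in A(N)$ is a positive definite idempotent. Assuming $T\neq 0$, the image $\Phi(1_{N_0})=T(\varphi_G^{-1}(1_{N_0}))$ is, for small enough $N_0$, a nonzero positive definite idempotent in $B(H)$, hence the indicator $1_{W}$ of an open subgroup $W\le H$ with $e_H\in W$. Evaluating $\Phi(1_{N_0})(a)=\phi(a)1_{N_0}(\psi(a))$ then gives $W\subseteq U$ and $\psi(W)\subseteq N_0$; letting $N_0$ shrink and using $\bigcap_{N_0}N_0=\{e_N\}$ forces $e_H\in U$ and $\psi(e_H)=e_N$, so that $\Phi^{*}$ is unital.

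With these normalisations in hand, I would extract the group-theoretic structure from positivity. For every positive definite $f\in A(N)$ the function $a\mapsto\Phi(f)(a)=\phi(a)f(\psi(a))$ is positive definite on $H$; reading the defining inequalities at translates, this is precisely the statement that $a\mapsto\lambda_N(\psi(a))$ pulls back the positive definite coefficients of the left regular representation of $N$ in a manner compatible with the group law of $H$. Comparing the resulting positive definite kernels at products of group elements should then force $ab\in U$ with $\psi(ab)=\psi(a)\psi(b)$ (the homomorphism alternative) or $\psi(ab)=\psi(b)\psi(a)$ (the anti-homomorphism alternative), together with $a^{-1}\in U$ and $\psi(a^{-1})=\psi(a)^{-1}$; continuity of $\psi$ on $U$ is already Lemma \ref{psicontinuous}. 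This makes $U$ an open subgroup and $\psi$ a continuous (anti-)homomorphism, and substituting back into Theorem \ref{A(G)toB(H)} yields the claimed formula.

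The main obstacle is this last step, and it is genuinely where all the content lies. The algebra-homomorphism property of $\Phi$ is symmetric under pointwise multiplication---equivalently, the comultiplicative identity $\Gamma_N\circ\Phi^{*}=(\Phi^{*}\otimes\Phi^{*})\circ\Gamma_H$ dual to multiplicativity of $\Phi$ treats each generator $\omega_H(a)$ independently---so it says nothing about how $\psi$ interacts with the noncommutative group law and cannot by itself distinguish a homomorphism from an anti-homomorphism, nor even guarantee multiplicativity. All such information must be squeezed out of positivity, and here one must be careful: scalar positivity (one positive definite $f$ at a time) is strictly weaker than complete positivity, so $a\mapsto\lambda_N(\psi(a))$ cannot simply be dilated to a unitary representation of $H$; that the anti-homomorphism case is unavoidable is already visible from $f\mapsto\check f$, which is positive and induces $s\mapsto s^{-1}$. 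I therefore expect the delicate part to be a local analysis near $e_H$, where $\psi$ takes values in a small compact open subgroup $N_0$, combined with the rigidity underlying the structure theorems of \cite{Host, IlieSpronk, LePham}, to obtain a single global choice of homomorphism versus anti-homomorphism across $U$.
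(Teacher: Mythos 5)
Your opening reduction is exactly the paper's: since $\varphi_G$ is an isometric isomorphism preserving positive definiteness, $T$ is positive if and only if $\Phi=T\circ\varphi_G^{-1}$ is, and Lemma~\ref{algebrahomo} makes $\Phi:\Ag\to B(H)$ a positive algebra homomorphism. At this point the paper finishes in one line by invoking \cite[Theorem 4.3]{LePham}, which states precisely that a positive homomorphism from a Fourier algebra into a Fourier--Stieltjes algebra is of the form $f\mapsto f\circ\psi$ on an open subgroup $U$ of $H$ (and $0$ off $U$), where $\psi:U\to\Ge$ is a continuous group homomorphism or anti-homomorphism; translating back through $\varphi_G$ gives the stated formula.

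Your proposal does not take that step, and this is where there is a genuine gap. Instead of quoting Le Pham's theorem as a black box, you attempt to derive the subgroup structure of $U$ and the (anti-)multiplicativity of $\psi$ directly from positivity. The van Dantzig normalisation ($e_H\in U$, $\psi(e_H)=e_N$, unitality of $\Phi^{*}$) is plausible, but the crucial claim --- that comparing positive definite kernels at products ``should then force'' $\psi(ab)=\psi(a)\psi(b)$ or $\psi(ab)=\psi(b)\psi(a)$, with a single coherent choice across all of $U$ --- is asserted rather than proven, and you yourself flag it as ``where all the content lies.'' Completing that step would amount to re-proving the main theorem of \cite{LePham}: the dichotomy between homomorphism and anti-homomorphism, and the globalisation of that choice over $U$, is exactly the hard part of that paper, and your observation that scalar positivity does not permit a dilation argument shows why no soft argument will do. Since you cite \cite{Host, IlieSpronk, LePham} only as a vague source of ``rigidity,'' the proof as written is incomplete; the fix is simply to recognise that once $\Phi$ is a positive homomorphism, \cite[Theorem 4.3]{LePham} delivers the conclusion verbatim, which is what the paper does.
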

\begin{proof}
Since isometric isomorphism $\varphi_G$ preserves positivity, $u \in A_I(G)$ is a positive definite function if and only if $\varphi_G(u)$ is positive definite. This implies that $T$ is positive if and only if $\Phi$ is positive, thus, $\Phi:A(\Ge) \to B(H)$ is a positive homomorphism by Lemma \ref{algebrahomo}. It follows from \cite[Theorem 4.3]{LePham} that there exists an open subgroup $U$ of $H$ and a continuous group homomorphism or anti-homomorphism $\psi$ from $U$ into $\Ge$ such that for any $f \in A(\Ge)$, 
$\Phi f$ is either equal to $f\circ \psi$ in $U$, or $0$ otherwise. 
Thus, we have
\[
Tf(a)=
\begin{cases}
     f([\psi(a)]_{G_e}) \quad & \text{if $a \in U$,}\\
    0                 & \text{if $ a \in H \setminus U$,}
  \end{cases}
  \]
  for any $f \in A_I(G)$ and $a \in H$.
\end{proof}

\begin{corollary}
Let $G$ and $H$ be two locally compact groups, and $T:A_{I}(G) \to B(H)$ be a contractive complex linear operator. If $T$ satisfies that $T(I(G)) \subset I_B(H)$ then
there exists an open subgroup $U$ of $H$, a continuous group homomorphism or anti-homomorphism $\psi$ from $U$ into $\Ge$, and elements $b \in G$ and $c \in H$ such that 
\[
Tf(a)=
\begin{cases}
    f(b[\psi(ca)]_{G_e}) \quad & \text{if $a \in c^{-1}U$,}\\
    0                 & \text{if $a \in H \setminus c^{-1}U$.}
  \end{cases}
  \]
\end{corollary}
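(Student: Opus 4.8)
The plan is to reduce to Le Pham's classification of contractive homomorphisms between Fourier and Fourier--Stieltjes algebras, following the proof of the preceding corollary but carrying the weaker hypothesis through. First I would replace $T$ by the induced map $\Phi = T\circ\varphi_G^{-1}:\Ag\to B(H)$. Since $\varphi_G$ is an isometric isomorphism by Proposition \ref{quotient} and \eqref{eq:varphi_G}, its inverse is isometric, so $\|\Phi f\|\le\|T\|\,\|f\|\le\|f\|$ for all $f$; hence $\Phi$ is contractive exactly because $T$ is. By Lemma \ref{algebrahomo}, $\Phi$ is an algebra homomorphism. Thus the problem becomes one about a contractive homomorphism $\Phi:\Ag\to B(H)$, and we may assume $\Phi\neq0$ (otherwise $T=0$ and there is nothing of substance to prove).

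Next I would apply Le Pham's structure theorem for contractive homomorphisms \cite{LePham} with $\Ge$ in place of the source group. This yields a set $Y$ in the coset ring of $H$ together with a continuous \emph{affine} map $\alpha:Y\to\Ge$ such that $\Phi f=f\circ\alpha$ on $Y$ and $\Phi f=0$ off $Y$; for a contractive (as opposed to merely bounded) homomorphism the set $Y$ may be taken to be a coset of an open subgroup of $H$, and the affine map $\alpha$ has its linear part a continuous group homomorphism or anti-homomorphism. The homomorphism/anti-homomorphism dichotomy here is precisely what produces the two cases in the statement.

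The third step is to put the affine datum into the normalised product shape demanded by the conclusion. Writing $Y=c^{-1}U$ for an open subgroup $U\le H$ and a fixed $c\in H$, and using that $a^{-1}a'\in U$ whenever $a,a'\in c^{-1}U$, an affine map is pinned down by its value at a single base point together with a homomorphism (or anti-homomorphism) $\psi:U\to\Ge$. Taking the base point to be $c^{-1}$ gives $\alpha(a)=\alpha(c^{-1})\,\psi(ca)$ for $a\in c^{-1}U$, where $ca\in U$. Choosing $b\in G$ with $q_G(b)=\alpha(c^{-1})$ and transporting through $\varphi_G$ as in \eqref{eq:varphi_G} --- which is where the representative bracket $[\,\cdot\,]_{G_e}$ appears --- one computes $q_G\big(b\,[\psi(ca)]_{G_e}\big)=\alpha(c^{-1})\psi(ca)=\alpha(a)$, so that $Tf(a)=\Phi f(a)=\varphi_G(f)(\alpha(a))=f\big(b\,[\psi(ca)]_{G_e}\big)$ on $c^{-1}U$, and $Tf(a)=0$ elsewhere.

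The main obstacle I expect is organisational rather than conceptual: correctly matching Le Pham's affine map to the explicit expression $b\,[\psi(ca)]_{G_e}$. This requires care about left versus right cosets, about which translation is absorbed into the range element $b\in G$ and which into the domain element $c\in H$, and about the well-definedness of $f\big(b\,[\psi(ca)]_{G_e}\big)$: since $f\in A_I(G)$ is constant on cosets of $G_e$, the value is independent of the choice of representative $b$ of $\alpha(c^{-1})$ and of the section $[\,\cdot\,]_{G_e}$, which is what makes the formula unambiguous. A final check is that the homomorphism/anti-homomorphism property of $\psi$ survives this rewriting intact.
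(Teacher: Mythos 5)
Your proposal is correct and follows essentially the same route as the paper: pass to $\Phi = T\circ\varphi_G^{-1}$, which is a contractive homomorphism by the isometry of $\varphi_G$ and Lemma \ref{algebrahomo}, invoke Le Pham's classification of contractive homomorphisms \cite{LePham} on $\Ag$, and translate the resulting coset/homomorphism data back through $\varphi_G$. The only difference is that you spell out the bookkeeping (base point $c^{-1}$, choice of representative $b$, well-definedness of $f(b[\psi(ca)]_{G_e})$) that the paper compresses into ``by recalling the definition of $\Phi$''; this is a useful elaboration, not a departure.
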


\begin{proof}
Since $\varphi_G$ is an isometric isomorphism, if $T$ is contractive then $\Phi$ is also a contractive operator. By Lemma \ref{algebrahomo}, $\Phi$ is a contractive homomorphism from $A(\Ge)$ into $B(H)$. It follows from \cite[Theorem 5.1]{LePham} that there exists an open subgroup $U$ of $H$, a continuous group homomorphism or anti-homomorphism $\psi$ from $U$ into $\Ge$, and elements $bG_e \in \Ge$ and $c \in H$ such that for any $f \in A(\Ge)$ and $a \in H$, $\Phi f(a)= f(bG_e\psi(ca))$ provided $a \in c^{-1}U$, otherwise, $\Phi f(a)= 0$. By recalling the definition of $\Phi$, we have the characterisation of $T$. 
\end{proof}

\subsection{Idempotent preserving maps with $T(I(G)) \subset I(H)$}

Let us assume that the bounded linear operator $T:A(G) \to B(H)$ satisfies $T(I(G)) \subset I(H)$. Then naturally we obtain $T(A_I(G)) \subseteq A_I(H)$.

We define $T_q: \Ag \to \Ah$ by 
\[
T_q(f)=\varphi_H \circ T \circ \varphi_G^{-1}(f)=\varphi_H \circ \Phi(f),
\]
for any $f \in \Ag$, where $\varphi_H: A_I(H) \to \Ah$ is an isometric isomorphism defined similarly as in \eqref{eq:varphi_G}. Note that $T_q$ is an algebraic homomorphism.

\begin{lemma}\label{equivalentclass}
Let $a \in \phi^{-1}(1) \subset H$ and $b\in H$ such that $a^{-1}b \in H_e$. Then $\phi(b)=1$ and $\psi(a)=\psi(b)$.
\end{lemma}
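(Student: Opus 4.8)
The plan is to translate the connectedness hypothesis $a^{-1}b \in H_e$ into a single functional identity, namely that $\Phi$ takes the same value at $a$ and at $b$, and then to read off both conclusions from the explicit representation \eqref{formofPhi}. The crucial structural observation is that, under the standing assumption $T(I(G)) \subset I(H)$ of this subsection, we have $T(A_I(G)) \subseteq A_I(H)$, so for every $f \in A(\Ge)$ the function $\Phi(f) = T(\varphi_G^{-1}(f))$ lies in $A_I(H)$. Since $H_e$ is compact, Proposition \ref{quotient} applied to $H$ gives $A_I(H) = A(H:H_e)$, that is, every element of $A_I(H)$ is constant on the cosets of $H_e$; equivalently, $T_q(f) = \varphi_H(\Phi(f)) \in A(\He)$ together with $\varphi_H(\Phi(f))(q_H(x)) = \Phi(f)(x)$ shows that $\Phi(f)(x)$ depends only on $q_H(x)=xH_e$. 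In either formulation, from $a^{-1}b \in H_e$ (hence $b = a\,(a^{-1}b)$ with $a^{-1}b \in H_e$) one gets $\Phi(f)(a) = \Phi(f)(b)$.

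With this in hand I would invoke \eqref{formofPhi} to obtain the core identity
\[
\phi(a)\, f(\psi(a)) = \phi(b)\, f(\psi(b)) \qquad \text{for all } f \in A(\Ge).
\]
Because $a \in \phi^{-1}(1)$ we have $\phi(a)=1$, so the left-hand side is simply $f(\psi(a))$. To extract $\phi(b)=1$, I would test against an idempotent: by Lemma \ref{existenceofidempotent} there is $1_{\psi(a)G_0} \in A(\Ge)$ with $1_{\psi(a)G_0}(\psi(a)) = 1$, and substituting $f = 1_{\psi(a)G_0}$ yields $1 = \phi(b)\,1_{\psi(a)G_0}(\psi(b))$. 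Since $\phi(b) \in \{0,1\}$ by \eqref{square} and the idempotent is two-valued, both factors must equal $1$; in particular $\phi(b)=1$.

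Finally, with $\phi(a)=\phi(b)=1$ the core identity reduces to $f(\psi(a)) = f(\psi(b))$ for all $f \in A(\Ge)$. As the Fourier algebra $A(\Ge)$ is regular and therefore separates the points of $\Ge$, were $\psi(a) \neq \psi(b)$ one could choose $f$ with $f(\psi(a))=1$ and $f(\psi(b))=0$, a contradiction; hence $\psi(a)=\psi(b)$. The only genuinely structural step is the first one: recognising that membership of $\Phi(f)$ in $A_I(H)$ forces constancy along the connected component $H_e$, which is precisely the content of Proposition \ref{quotient} for $H$ and explains why the hypothesis is framed through $H_e$. Once this ``constant on $H_e$-cosets'' property is established, the remaining steps are routine, relying only on the two-valuedness \eqref{square} of $\phi$ and the point separation afforded by regularity of $A(\Ge)$.
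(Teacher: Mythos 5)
Your proof is correct, but it reaches the conclusion by a genuinely different route than the paper. The paper argues pointwise and by contradiction: assuming $\psi(a)\neq\psi(b)$, it uses Hausdorffness of $\Ge$ and Lemma \ref{support} to produce an idempotent $h\in I(\Ge)$ with $\Phi(h)(a)\neq 0$ but $\Phi(h)(b)=0$; since the hypothesis $T(I(G))\subset I(H)$ forces $\Phi(h)=1_Y$ for some $Y\in\Omoc(H)$, and such $Y$ is clopen, hence a union of connected components, it gets $aH_e\subset Y$, so $b\in Y$ and the contradiction $1=\Phi(h)(b)=0$; a second run of the same argument gives $\phi(b)=1$. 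You instead establish constancy on $H_e$-cosets for \emph{all} of $\Phi(A(\Ge))$ at once, by combining $T(A_I(G))\subseteq A_I(H)$ with Proposition \ref{quotient} applied to $H$, and then read off both conclusions from the single identity $f(\psi(a))=\phi(b)f(\psi(b))$, testing it against the idempotent of Lemma \ref{existenceofidempotent} (using \eqref{square}) and against point separation of $A(\Ge)$. The underlying mechanism is the same in both arguments --- clopen sets are unions of $H_e$-cosets, which is also what drives Proposition \ref{quotient} --- but your proof is direct rather than by contradiction, avoids Lemma \ref{support} entirely, and is shorter; incidentally it also settles $\phi(b)=1$ \emph{before} using $\psi(b)$, which is cleaner, since $\psi(b)$ is only canonically determined when $\phi(b)\neq 0$. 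What the paper's hands-on argument buys is that it never needs Proposition \ref{quotient} for the target group, so it never has to address whether $H_e$ is compact. That is your one loose end: you assert ``Since $H_e$ is compact'' without justification. It is easily repaired: either observe that $a\in\phi^{-1}(1)$ makes $\Phi(1_{\psi(a)G_0})=T\bigl(\varphi_G^{-1}(1_{\psi(a)G_0})\bigr)$ a non-zero element of $I(H)$, so $I(H)\neq\{0\}$ and $H_e$ must be compact; or invoke the last sentence of Proposition \ref{quotient}: if $H_e$ is not compact then $A_I(H)=\{0\}$, whence $\Phi=0$, $\phi\equiv 0$, and the lemma is vacuous. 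Either remark closes the gap.
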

\begin{proof}
Suppose that $\psi(a)\neq \psi(b)$. By (\ref{formofPhi}), we have $\Phi: \Ag \to B(H)$ such that for any $f \in \Ag$,
\[
\Phi(f)(a)=f(\psi(a))
\]
and 
\[
\Phi(f)(b)=\phi(b)f(\psi(b)).
\]
Since $\Ge$ is Hausdorff, there are disjoint open neighbourhoods $V_a$ and $V_b$ of $\psi(a)$ and $\psi(b)$, respectively, in $\Ge$. By Lemma \ref{support}, for $\lambda_{\Ge}(\psi(a)) \in VN(\Ge)$, there is $h \in I(\Ge)$ such that $\supp h \subset V_a$ and $h(\psi(a))\neq 0$. Since $a \in \phi^{-1}(1)$, we get 
\[
\Phi(h)(a)=h(\psi(a))\neq 0
\]
and 
\[
\Phi(h)(b)=\phi(b)h(\psi(b))=0.
\]
By the assumption that $T(I(G)) \subset I(H)$ and $\varphi_G^{-1}(h)\in I(G)$, we have $\Phi(h)=T(\varphi_G^{-1}(h)) \in I(H)$, an idempotent in $A(H)$. Hence, there is $Y \in \Omoc(H)$ such that $1_Y=\Phi(h)$. Since $1_Y(a)=\Phi(h)(a)=h(\psi(a))\neq 0$, we have $a \in Y$.
In addition, $Y$ is a clopen subset of $H$ and $H_e$ is a connected component containing $e$, thus $aH_e \subset Y$. This implies that $b=aa^{-1}b \in Y$.
It follows that 
\[
1=1_Y(b)=\Phi(h)(b)=0.
\]
This is a contradiction. Thus we have $\psi(a)=\psi(b)$. Furthermore, suppose that $\phi(b)=0$. There is an $h \in I(\Ge)$ such that $h(\psi(b))\neq 0$. Thus there is $Y \in \Omoc(H)$ such that $1_Y=\Phi(h)$. By a similar argument, we have $1_Y(a)=\Phi(h)(a)=h(\psi(a))\neq 0$, $a \in Y$ and $b \in Y$.  We obtain that 
\[
1=1_Y(b)=\Phi(h)(b)=\phi(b)h(\psi(b))=0.
\]
This is a contradiction. Therefore, $\phi(b)=1$ and $\psi(a)= \psi(b)$.
\end{proof}

For any $a,b \in H$, the condition $a^{-1}b \in H_e$ induces an equivalence relation on $H$. 
Lemma \ref{equivalentclass} shows that $\phi: H \to \{0,1\}$ and $\psi:H \to \Ge$ are constant functions on each equivalence class. Thus these induce maps $\phi':\He \to \{0,1\}$ and $\psi': \phi'^{-1}(1) \to \Ge$ by
\[
\phi'(aH_e)=\phi(a) \quad \text{for any $a \in H$}
\]
and 
\[
\psi'(aH_e)=\psi(a) \quad \text{for any $aH_e \in \phi'^{-1}(1)$}.
\]
By Lemma \ref{psicontinuous}, the map $\psi:\phi^{-1}(1) \to \Ge$ is continuous. As we have $\phi'^{-1}(1)=q_{H}(\phi^{-1}(1))$, we obtain that $\psi': \phi'^{-1}(1) \to \Ge$ is continuous. 

\begin{theorem}\label{boundedmap}
Let $G$ and $H$ be two locally compact Hausdorff groups and $T:A(G) \to B(H)$ be a bounded complex linear operator. Suppose that $T$ satisfies $T(I(G)) \subset I(H)$.
Then there exists an open subset $U$ of $H$ and a continuous map $\psi'$ from an open subset $q_{H}(U)$ of $\He$ into $\Ge$ such that 
\begin{equation}\label{the form of T}
   Tf(a)=
\begin{cases}
     f([\psi'(aH_e)]_{G_e}) \quad & \text{if $a \in U$,} \\
    0                 & \text{if $ a \in H \setminus U$,}
  \end{cases}
\end{equation}
for any $f \in A_I(G)$.
\end{theorem}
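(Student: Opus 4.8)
The plan is to obtain this theorem as a refinement of Theorem \ref{A(G)toB(H)}: the weaker hypothesis $T(I(G)) \subset I_B(H)$ already yields the representation \eqref{form of T} with a parametrizing map $\psi$ defined on an open set $U \subseteq H$, and the task here is to show that under the stronger hypothesis $T(I(G)) \subset I(H)$ this map descends through the quotient $q_H : H \to \He$. Since the substantive work has been carried out in the preceding lemmas, the argument is essentially an assembly, and I expect no genuine obstacle beyond bookkeeping about the quotient.

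First I would set $U = \phi^{-1}(1)$, which is open in $H$ by Lemma \ref{phiopen}, and recall from the proof of Theorem \ref{A(G)toB(H)} (via \eqref{formofPhi}) that for every $f \in A_I(G)$ one has $Tf(a) = \phi(a)\,\varphi_G(f)(\psi(a))$, so that $Tf$ equals $\varphi_G(f)\circ\psi$ on $U$ and vanishes off $U$.

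Second, I would invoke the stronger hypothesis, whose only role beyond Theorem \ref{A(G)toB(H)} is through Lemma \ref{equivalentclass}: because each $Th$ with $h \in I(G)$ is now an idempotent in $A(H)$, hence the indicator of a set in $\Omoc(H)$, it is constant on every connected component $aH_e$. Consequently $\phi$ and $\psi$ are constant on the cosets of $H_e$, which produces the well-defined induced maps $\phi' : \He \to \{0,1\}$ and $\psi' : \phi'^{-1}(1) \to \Ge$ satisfying $\phi = \phi' \circ q_H$ and $\psi = \psi' \circ q_H$. As already noted in the text preceding the theorem, $q_H(U) = \phi'^{-1}(1)$ and $\psi'$ is continuous, its continuity being inherited from that of $\psi$ (Lemma \ref{psicontinuous}) through the open quotient map $q_H$.

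Finally I would substitute. For $a \in U$ we have $\psi(a) = \psi'(aH_e)$ and $\phi(a) = 1$, so by the defining relation \eqref{eq:varphi_G} together with the representative map $[\,\cdot\,]_{G_e}$, which satisfies $q_G([b]_{G_e}) = b$, one gets
\[
Tf(a) = \varphi_G(f)(\psi'(aH_e)) = \varphi_G(f)\bigl(q_G([\psi'(aH_e)]_{G_e})\bigr) = f([\psi'(aH_e)]_{G_e}).
\]
For $a \in H \setminus U$ we have $\phi(a) = 0$, whence $Tf(a) = 0$. This yields precisely \eqref{the form of T}. The only point requiring care, rather than a real difficulty, is verifying that the domain of $\psi'$ is exactly $q_H(U)$ and that continuity descends correctly through the quotient, both of which follow from the openness of $q_H$.
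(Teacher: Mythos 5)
Your proof is correct and follows essentially the same route as the paper: both take $U=\phi^{-1}(1)$ (open by Lemma \ref{phiopen}), descend $\phi$ and $\psi$ to $\phi'$ and $\psi'$ via Lemma \ref{equivalentclass} and the continuity statement preceding the theorem, and then substitute into \eqref{formofPhi} using the definitions of $\varphi_G$ and $[\,\cdot\,]_{G_e}$. The only deviation is minor: the paper re-proves openness of $q_H(U)=\phi'^{-1}(1)$ by a direct value-gap argument using $T_q(1_{\psi'(a)G_0})\in A(\He)\subset C_0(\He)$, whereas you deduce it from the openness of the quotient map $q_H$, which is a valid and slightly more economical shortcut.
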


\begin{proof}
Define $U=\phi^{-1}(1)$. Since $q_{H}: H \to \He$ is the quotient map and $U$ is a open subset of $H$ by Lemma \ref{phiopen}.  
By (\ref{formofPhi}), for any $f \in \Ag$ and $a \in H$, we have
\begin{multline}\label{Gammaform}
    T_q(f)(aH_e)=\varphi_H\circ \Phi(f)(aH_e)=\Phi(f)(a)
    =\phi'(aH_e)f(\psi(a)).
\end{multline}
We shall show that $\phi'^{-1}(1)$ is an open subset of $\He$.
Let $a \in \phi'^{-1}(1)$. By Lemma \ref{existenceofidempotent}, there is an idempotent $1_{\psi'(a)G_0}$ of $A(\Ge)$ where $\psi'(a)G_0$ is an open compact neighbourhood of $\psi'(a)$. Since $T_q(1_{\psi'(a)G_0}) \in A(\He) \subset C_0(\He)$, the space of all continuous functions on $\He$ vanishing at infinity, there exists an open neighbourhood $V$ of $a$ in $\He$ such that if $b \in q_H^{-1}(V)$ then 
\[
|T_q(1_{\psi'(a)G_0})(a)- T_q(1_{\psi'(a)G_0})(bH_e)|\le \frac{1}{2}.
\]
We have 
\[
|1-\phi'(bH_e)1_{\psi'(a)G_0}(\psi(b))|= |T_q(1_{\psi'(a)G_0})(a)- T_q(1_{\psi'(a)G_0})(bH_e)| \le \frac{1}{2}.
\]
Since either $\phi'(bH_e)1_{\psi'(a)G_0}(\psi(b))= 1$ or $\phi'(bH_e)1_{\psi'(a)G_0}(\psi(b))= 0$, this implies that 
\[
\phi'(bH_e)1_{\psi'(a)G_0}(\psi(b))=1.
\]
Hence we have $\phi'(bH_e)=1$ for any $b \in q_H^{-1}(V)$. Thus $V \subset \phi'^{-1}(1)$. It follows that $\phi'^{-1}(1)$ is an open subset of $\He$.
Let us recall that $\psi':q_{H}(U) \to \Ge $ is a continuous map. 
Applying (\ref{Gammaform}), we have 
\begin{equation*}
    \begin{split}
    T_q(f)(aH_e)&=\phi'(aH_e)f(\psi(a))\\
  &=
    \begin{cases}
     f(\psi'(aH_e)) \quad & \text{if $a \in U$,} \\
    0                 & \text{if $ a \in H \setminus U$,}
  \end{cases}\\
    \end{split}
\end{equation*}
for any $f \in A(\Ge)$ and $a \in H$. 
As we have
\begin{multline*}
    T_q(\varphi_G(f))(aH_e)=\varphi_H\circ T\circ \varphi_G^{-1}(\varphi_G(f))(aH_e)=\varphi_H \circ T(f)(aH_e)=Tf(a)
\end{multline*}
for any $f \in A_I(G)$ and $a \in H$, we get 
 \[
  \begin{split}
Tf(a)&=
\begin{cases}
    \varphi_G(f)(\psi'(aH_e)) \quad & \text{if $a \in U$,} \\
    0                 & \text{if $a \in H \setminus U$.}
  \end{cases}\\
  \end{split}
  \]
\end{proof}

\section{Idempotent preserving bijections on $A_I(G)$}
In this section we assume furthermore that the bounded linear operator $T: A(G) \to B(H)$ satisfies that $T(I(G)) \subset I(H)$ and $T|_{A_I(G)}$ is a bijection onto $A_I(H)$.

\begin{theorem}\label{Tisbijective}
Let $G$ and $H$ be two locally compact groups, and $T:A(G) \to B(H)$ be a bounded complex linear operator. Suppose the operator $T$ satisfies that $T(I(G)) \subset I(H)$ and $T|_{A_I(G)}:A_I(G) \to A_I(H)$ is bijective. Then 
there exists a homeomorphism $\psi: \He \to \Ge$ such that 
\[
Tf(a)=f([\psi(aH_e)]_{G_e})
\]
for all $f \in A_{I}(G)$ and $a \in H$.
\end{theorem}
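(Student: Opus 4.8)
The plan is to leverage the extra bijectivity hypothesis at the level of the induced map $T_q=\varphi_H\circ T\circ\varphi_G^{-1}\colon\Ag\to\Ah$, which is now not merely a homomorphism but an \emph{isomorphism}. Since $\varphi_G,\varphi_H$ are isometric isomorphisms and $T|_{A_I(G)}$ is a bijection onto $A_I(H)$, the map $T_q$ is a bounded bijective algebra homomorphism; by the bounded inverse theorem $T_q^{-1}$ is bounded, and as the inverse of an algebra isomorphism it is again an algebra homomorphism from $\Ah$ onto $\Ag$. Because $\Ge$ and $\He$ are totally disconnected, their identity components are trivial and $A_I(\Ge)=A(\Ge)$, $A_I(\He)=A(\He)$; moreover any algebra homomorphism between these Fourier algebras carries idempotents to idempotents. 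Thus Theorem~\ref{boundedmap} applies to $T_q$ itself (with $\Ge,\He$ in the roles of the two groups and the quotient maps equal to the identity), yielding $\phi'\colon\He\to\{0,1\}$ and a continuous $\psi'\colon(\phi')^{-1}(1)\to\Ge$ with $T_q(f)(aH_e)=\phi'(aH_e)\,f(\psi'(aH_e))$.

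The first key step is to show $\phi'\equiv1$. If some coset $p\in\He$ had $\phi'(p)=0$, then $T_q(f)(p)=0$ for every $f\in\Ag$; but $A(\He)$ is regular and semisimple with spectrum $\He$, so there is $g\in\Ah$ with $g(p)\neq0$, and surjectivity of $T_q$ produces $f\in\Ag$ with $T_q(f)=g$, a contradiction. Hence $\phi'\equiv1$ and $T_q(f)=f\circ\psi'$ with $\psi'\colon\He\to\Ge$ continuous and defined on all of $\He$. Running the identical argument for $T_q^{-1}\colon\Ah\to\Ag\subset B(\Ge)$ — which is bounded, complex linear, and sends $I(\He)$ into $I(\Ge)$, and whose surjectivity again forces the analogous $\{0,1\}$-valued function to be $\equiv1$ — produces a continuous map $\eta\colon\Ge\to\He$ with $T_q^{-1}(g)=g\circ\eta$ for all $g\in\Ah$.

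It then remains to combine the two representations. From $T_q^{-1}\circ T_q=\mathrm{id}_{\Ag}$ we get $f\circ(\psi'\circ\eta)=f$ for all $f\in\Ag$, and from $T_q\circ T_q^{-1}=\mathrm{id}_{\Ah}$ we get $g\circ(\eta\circ\psi')=g$ for all $g\in\Ah$. Since $A(\Ge)$ and $A(\He)$ separate the points of $\Ge$ and $\He$ respectively, these identities force $\psi'\circ\eta=\mathrm{id}_{\Ge}$ and $\eta\circ\psi'=\mathrm{id}_{\He}$. Hence $\psi:=\psi'$ is a continuous bijection $\He\to\Ge$ with continuous inverse $\eta$, i.e. a homeomorphism. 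Finally, unwinding $Tf(a)=\varphi_G(f)\big(\psi'(aH_e)\big)=f\big([\psi(aH_e)]_{G_e}\big)$ through the definitions of $\varphi_G$ and of the representative map $[\,\cdot\,]_{G_e}$ gives the claimed formula for all $f\in A_I(G)$ and $a\in H$.

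The step I expect to require the most care is forcing $\phi'\equiv1$ (equivalently $U=H$): one must use that $T_q$ is surjective onto the \emph{whole} algebra $A(\He)$, together with the existence of a function in $A(\He)$ not vanishing at any prescribed point, to exclude the vanishing branch of \eqref{the form of T}. A secondary point needing verification is that Theorem~\ref{boundedmap} genuinely applies to $T_q^{-1}$: one checks boundedness (bounded inverse theorem), complex linearity, and idempotent preservation, and one uses that, since $\Ge$ and $\He$ are totally disconnected, the ambient quotients by the identity components collapse, so that $\eta$ lands in $\He$ itself rather than in a further quotient.
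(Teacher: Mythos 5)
Your proposal is correct and follows essentially the same route as the paper's proof: reduce to the algebra isomorphism $T_q$ between $A(G/G_e)$ and $A(H/H_e)$, apply Theorem~\ref{boundedmap}, use surjectivity plus non-vanishing of the Fourier algebra to kill the zero branch, repeat for the inverse (your observation that $T_q^{-1}$, being the inverse of an algebra isomorphism, preserves idempotents is exactly what the paper establishes by its explicit injectivity computation $T_q(f_q^2)=T_q(f_q)$), and then use point separation to conclude $\psi$ is a homeomorphism. The only cosmetic difference is that you apply Theorem~\ref{boundedmap} directly to $T_q$ and $T_q^{-1}$ (using total disconnectedness to collapse the quotients), whereas the paper applies it to $T$ itself; both yield the same representation.
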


\begin{proof}
Since $T|_{A_I(G)}$ is a bijective linear map, $\varphi_G$ and $\varphi_H$ are isometric isomorphisms, by the proof of Proposition \ref{quotient} we have $T_q:= \varphi_H \circ T|_{A_I(G)} \circ \varphi_G^{-1}$ is an isomorphism from $A(\Ge)$ onto $A(\He)$. 

Applying Theorem \ref{boundedmap}, there is an open subset $U$ of $H$ such that and a continuous map $\psi$ from an open subset $q_{H}(U)$ of $\He$ into $\Ge$ such that 
\begin{equation*}
   T_q(f)(a)=
\begin{cases}
     f(\psi(a)) \quad & \text{if $a \in q_{H}(U)$,} \\
    0                 & \text{if $ a \in (\He) \setminus q_{H}(U)$,}
  \end{cases}
\end{equation*}
for any $f \in A(\Ge)$. Since $T_q:A(\Ge) \to A(\He)$ is surjective and the Fourier algebra $A(\He)$ separates the points in $\He$, we have $q_{H}(U)=\He$. Thus $U=H$ and we have
\[
 T_q(f)(a)=f(\psi(a))
\]
for every $f \in A(\Ge)$ and  $a \in \He$.
For any $h \in I(H)$, there exists $h_q \in A(\He)$ with $h_q^2= h_q$ such that 
\[
\varphi_H(h)= h_q.
\]
Since $T_q$ is bijective, there exists $f_q \in A(\Ge)$ such that 
\[
T_q(f_q)= h_q.
\]
Moreover, since $T_q$ is an algebraic homomorphim, we have $T_q(f_q^2)=(T_q(f_q))^2= h_q^{2}= h_q= T_q(f_q)$. By the injectivity of $T_q$, we get $f_q^2= f_q$. On the other hand, as $\varphi_G$ is an isometric isomorphism from $A_I(G)$ onto $A(\Ge)$, there exists $f \in I(G)$ such that 
\[
\varphi_G(f)= f_q.
\]
Hence, we have
\[
T(f)=(\varphi_H^{-1}\circ T_q \circ \varphi_G)(f)=\varphi_H^{-1}\circ T_q(f_q)= \varphi_H^{-1}(h_q)= h.
\]
This implies that $T(I(G))= I(H)$. In particular, we have $T^{-1}(I(H)) \subset I(G)$.
Thus we can apply similar arguments to $T|_{A_I(G)}^{-1}: A_I(H) \to A_I(G)$ and to $T_q^{-1}=\varphi_G \circ T|_{A_I(G)}^{-1} \circ \varphi_H^{-1}$ on $A(\He)$, we can then define a continuous map $\tilde{\psi}: \Ge \to \He$ such that
\[
T_q^{-1}(g)(b)=g(\tilde{\psi}(b))
\]
for any $g \in \Ah$ and $b \in \Ge$.

For any $g\in \Ah$ and $a \in \He$, we have 
\[
g(a)=T_q(T_q^{-1}g)(a)=g(\tilde{\psi}(\psi(a))).
\]
Since the Fourier algebra $\Ah$ separates  points in $\He$, we get 
\begin{equation}\label{sH}
    a=\tilde{\psi}(\psi(a)) \quad \text{for} \quad a \in \He.
\end{equation}
Moreover, we obtain 
\[
f(b)=T_q^{-1}(T_q f)(b)=f(\psi(\tilde{\psi}(b))),
\]
for any $f \in \Ag$ and $b \in \Ge$.
Similarly, as $\Ag$ separates points in $\Ge$, we have 
\begin{equation}\label{sG}
    b=\psi(\tilde{\psi}(b)) \quad \text{for} \quad b \in \Ge.
\end{equation}
By (\ref{sH}) and (\ref{sG}), we have that $\psi:\He \to \Ge$ is a bijection and $\tilde{\psi}=\psi^{-1}$. Let us recall that $\psi$ and $\tilde{\psi}$ are continuous on $\He$ and $\Ge$, respectively. As $\tilde{\psi}=\psi^{-1}$, we have that $\psi$ is a homeomorphism. In addition, we obtain
\begin{equation*}
      T_q(f)(a)=f(\psi(a)) \quad \text{for }\, f \in \Ag, \, a \in \He.
\end{equation*}
Since $T=\varphi_H^{-1} \circ T_q \circ \varphi_G$, we get 
\[
Tf(a)=f([\psi(aH_e)]_{G_e})
\]
for all $f \in A_{I}(G)$ and $a \in H$.
\end{proof}

Note that the assumption of bijectivity in above theorem is needed for the function $\psi: \He \to \Ge$ to be a homeomorphism. 

\begin{example}
Let $G=\{1,2\}$ be a multiplicative group equipped with the discrete topology. Let $H=\{0\}$ be the trivial group.
We define $T: A(G) \to A(H)$ by $Tf(0)=f(1)$ for any $f \in A(G)$. Then $T$ is a bounded complex linear operator on $A(G)$ and for any $1_Y \in A(G)$, $T(1_Y)=1_H$ if $1 \in Y$, otherwise $T(1_Y)=0$. Thus $T(I(G))=I(H)$. On the other hand, $T(1_{\{1\}})=1_H=T(1_G)$, this implies that $T|_{A_I(G)}:A_I(G) \to A_I(H)$ is not injective. In addition, $\psi: \He= H \to \Ge= G$ satisfying 
\[
\psi(0)=1.
\]
is not a homeomorphism.
\end{example}

With extra assumptions on $T$ as in Section \ref{sec:idem preserver}, we obtain a characterisation of linear idempotent preserving maps between two Fourier algebras. Note that since the continuous map $\psi$ in the following two corollaries is either a group isomorphism or an anti-isomorphism, we naturally have $f\circ \psi\in A_I(H)$ for any $f \in A_I(G)$ (see \cite{Wal}), thus, we obtain a necessary and sufficient condition for the idempotent preserving operator $T$ on $A_I(G)$.

\begin{corollary}
Let $G$ and $H$ be two locally compact groups. A surjective complex linear contraction $T: A_I(G) \to A_I(H)$ satisfies  $T(I(G)) \subset I(H)$ if and only if
there exists a continuous group isomorphism or anti-isomorphism $\psi: \He \to \Ge$ and an element $b \in G$ such that 
\[
Tf(a)=f(b[\psi(aH_e)]_{G_e})
\]
for all $f \in A_{I}(G)$ and $a \in H$.
\end{corollary}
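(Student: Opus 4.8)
The plan is to prove the two implications separately, deducing the nontrivial (forward) direction by combining Theorem \ref{Tisbijective} with Le Pham's classification of contractive homomorphisms, and checking the converse by direct computation. For the forward direction I would first transport $T$ to the totally disconnected quotients. Since $\varphi_G$ and $\varphi_H$ are isometric isomorphisms and $T|_{A_I(G)}$ is, by the standing hypothesis of this section, a bijection onto $A_I(H)$, the map $T_q=\varphi_H\circ T\circ\varphi_G^{-1}:\Ag\to\Ah$ is a bijective contractive algebra homomorphism (Lemma \ref{algebrahomo}). Theorem \ref{Tisbijective} then produces a homeomorphism $\psi_0:\He\to\Ge$ with $T_qg(y)=g(\psi_0(y))$ for all $g\in\Ag$, $y\in\He$. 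On the other hand, regarding $T_q$ as a contractive homomorphism into $B(\He)$ and applying \cite[Theorem 5.1]{LePham}, there are an open subgroup $\Omega\le\He$, a continuous group homomorphism or anti-homomorphism $\alpha:\Omega\to\Ge$, and elements $x_0\in\Ge$ and $y_0\in\He$ such that $T_qg(y)=g(x_0\alpha(y_0y))$ for $y\in y_0^{-1}\Omega$ and $T_qg(y)=0$ otherwise.

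The crux is to reconcile these two descriptions. On $y_0^{-1}\Omega$ they agree, so $g(\psi_0(y))=g(x_0\alpha(y_0y))$ for all $g\in\Ag$; since $\Ag$ separates the points of $\Ge$, this forces $\psi_0(y)=x_0\alpha(y_0y)$ there. Off $y_0^{-1}\Omega$ Le Pham's formula gives $T_qg\equiv0$, which contradicts $T_qg(y)=g(\psi_0(y))$ once one chooses, by regularity of $\Ag$, a $g$ with $g(\psi_0(y))\neq0$; hence $\Omega=\He$ and $\psi_0(y)=x_0\alpha(y_0y)$ on all of $\He$. Solving, $\alpha(z)=x_0^{-1}\psi_0(y_0^{-1}z)$ is a composition of the homeomorphism $\psi_0$ with translations, so $\alpha$ is simultaneously a homeomorphism and a group homomorphism or anti-homomorphism, i.e. a continuous group isomorphism or anti-isomorphism. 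It remains to absorb the translations: set $\beta=x_0\alpha(y_0)\in\Ge$, and take $\psi=\alpha$ in the homomorphism case, whereas in the anti-homomorphism case take $\psi(y)=\alpha(y_0)^{-1}\alpha(y)\alpha(y_0)$, which is again a continuous anti-isomorphism because composing with the inner automorphism by $\alpha(y_0)$ preserves this property. In both cases $\psi_0(y)=\beta\psi(y)$. Choosing any $b\in G$ with $q_G(b)=\beta$ and using that each $f\in A_I(G)$ is constant on cosets of $G_e$, one has $q_G(b[\psi(aH_e)]_{G_e})=\beta\psi(aH_e)=\psi_0(aH_e)=q_G([\psi_0(aH_e)]_{G_e})$, and therefore $Tf(a)=f([\psi_0(aH_e)]_{G_e})=f(b[\psi(aH_e)]_{G_e})$, as required.

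For the converse I would, given a continuous isomorphism or anti-isomorphism $\psi:\He\to\Ge$ and $b\in G$, factor the prescribed operator as $T=\varphi_H^{-1}\circ S_q\circ\varphi_G$, where $S_qg=g\circ\Theta$ with $\Theta(y)=(bG_e)\psi(y)$ a homeomorphism of $\He$ onto $\Ge$. Composition with a topological group isomorphism or anti-isomorphism is an isometric isomorphism between the corresponding Fourier algebras (see \cite{Wal}), and left translation by $bG_e$ is an isometric automorphism of $\Ag$; hence $S_q$, and so $T$, is an isometric bijection, in particular a surjective contraction. Since $\Theta$ respects the coset-ring structure, $S_q$ carries the idempotents of $\Ag$ (indicator functions of elements of $\Omoc(\Ge)$) to idempotents of $\Ah$, so $T(I(G))\subset I(H)$. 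The main obstacle is the matching argument of the second paragraph: one must run Theorem \ref{Tisbijective} and \cite[Theorem 5.1]{LePham} in tandem and then remove $x_0$ and $y_0$ correctly, the delicate point being that in the anti-homomorphism case $y_0$ has to be eliminated by an inner automorphism so that the resulting $\psi$ remains an anti-isomorphism in the left-translated form $f(b[\psi(aH_e)]_{G_e})$ demanded by the statement.
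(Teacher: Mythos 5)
Your proposal is correct and follows essentially the route the paper intends for this corollary (which it states without an explicit proof): combine Theorem \ref{Tisbijective} with the contractive corollary of Section \ref{sec:idem preserver} (i.e.\ Le Pham's classification applied to the homomorphism $T_q$), reconcile the two resulting forms, and invoke Walter's result for the converse. Your added care lands exactly where it is needed — injectivity must indeed be drawn from the section's standing bijectivity hypothesis (the paper's own example with $G$ of order two and $H$ trivial shows that surjectivity plus contractivity alone cannot suffice), and the conjugation trick $\psi(y)=\alpha(y_0)^{-1}\alpha(y)\alpha(y_0)$ correctly absorbs the translations in the anti-homomorphism case.
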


\begin{corollary}
Let $G$ and $H$ be two locally compact groups. A positive bounded complex linear bijection $T: A_I(G) \to A_I(H)$ satisfies $T(I(G)) \subset I(H)$ if and only if there exists a continuous group isomorphism or anti-isomorphism $\psi: \He \to \Ge$ such that 
\[
Tf(a)=f([\psi(aH_e)]_{G_e})
\]
for all $f \in A_{I}(G)$ and $a \in H$.

\end{corollary}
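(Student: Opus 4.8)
The statement is a biconditional inside the class of positive bounded complex linear bijections \(T\colon A_I(G)\to A_I(H)\), so I would prove the two implications separately. In both directions the central object is the descended map \(T_q=\varphi_H\circ T\circ\varphi_G^{-1}\colon A(\Ge)\to A(\He)\): it is an algebraic homomorphism by Lemma \ref{algebrahomo}, it is a bijection since \(T\), \(\varphi_G\) and \(\varphi_H\) are, and it is positive because \(\varphi_G\) and \(\varphi_H\) preserve positivity. Equivalently one works with \(\Phi=T\circ\varphi_G^{-1}\colon A(\Ge)\to B(H)\), which for the same reasons is a positive homomorphism.

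For the forward implication I would first apply Theorem \ref{Tisbijective}; its proof uses \(T\) only on \(A_I(G)\), so it yields a homeomorphism \(\psi\colon\He\to\Ge\) with \(Tf(a)=f([\psi(aH_e)]_{G_e})\) for all \(a\in H\), equivalently \(T_q g=g\circ\psi\) for \(g\in A(\Ge)\). This already fixes the exceptional set to be empty and makes \(\psi\) a bijective homeomorphism; the only thing left is to promote \(\psi\) to a group isomorphism or anti-isomorphism, and this is exactly where positivity enters. Since \(\Phi\) is a positive homomorphism, \cite[Theorem 4.3]{LePham} supplies an open subgroup \(W\le H\) and a continuous group homomorphism or anti-homomorphism \(\psi_0\colon W\to\Ge\) with \(\Phi f=f\circ\psi_0\) on \(W\) and \(\Phi f=0\) off \(W\). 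Comparing this with the formula from Theorem \ref{Tisbijective}, which has no vanishing part, forces \(W=H\); and since \(A(\Ge)\) separates the points of \(\Ge\), the two descriptions must agree, i.e. \(\psi_0=\psi\circ q_H\) on \(H\). Thus \(\psi\circ q_H\) is a group homomorphism (or anti-homomorphism), and because \(q_H\colon H\to\He\) is a surjective group homomorphism, \(\psi\) itself descends to a continuous group homomorphism (or anti-homomorphism) on \(\He\). Being at once a bijective homeomorphism and a group homomorphism, \(\psi\) is a continuous group isomorphism or anti-isomorphism, which is the desired conclusion.

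For the backward implication, the routine direction, I would start from the given formula and fix \(h\in I(G)\), so that \(\varphi_G(h)\in I(\Ge)\) is an idempotent of \(A(\Ge)\). Writing \(Th(a)=h([\psi(aH_e)]_{G_e})=\varphi_G(h)(\psi(aH_e))\) shows \(Th=\varphi_H^{-1}\bigl(\varphi_G(h)\circ\psi\bigr)\). Because \(\psi\colon\He\to\Ge\) is a continuous group isomorphism or anti-isomorphism, composition with \(\psi\) maps \(A(\Ge)\) into \(A(\He)\) and preserves idempotency (the fact from \cite{Wal} already recorded in the remark preceding the corollaries), so \(\varphi_G(h)\circ\psi\in I(\He)\). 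Applying \(\varphi_H^{-1}\), an isometric isomorphism onto \(A_I(H)\) carrying idempotents to idempotents, gives \(Th\in I(H)\), whence \(T(I(G))\subset I(H)\).

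The step I expect to be the main obstacle is the reconciliation in the forward implication: matching the homeomorphism \(\psi\) produced by Theorem \ref{Tisbijective} with the group-theoretic map \(\psi_0\) produced by \cite[Theorem 4.3]{LePham}, confirming that the open subgroup \(W\) is all of \(H\), and then verifying that the homomorphism property is inherited through \(q_H\) so that \(\psi\) on \(\He\) is a genuine group isomorphism or anti-isomorphism rather than merely a homeomorphism.
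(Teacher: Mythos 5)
Your proposal is correct and follows essentially the same route as the paper: the forward direction combines Theorem \ref{Tisbijective} (giving the homeomorphism $\psi$ and the empty exceptional set) with the positivity corollary of Section \ref{sec:idem preserver} via \cite[Theorem 4.3]{LePham} applied to $\Phi$, reconciling the two representations by point separation of $A(\Ge)$; the backward direction is the paper's remark invoking \cite{Wal}, that composition with a continuous group isomorphism or anti-isomorphism carries $A(\Ge)$ into $A(\He)$ and preserves idempotency. The reconciliation step you flag as the main obstacle does go through exactly as you sketch (nonvanishing of the formula from Theorem \ref{Tisbijective} forces $W=H$, and surjectivity of $q_H$ descends the homomorphism property to $\He$), so there is no gap.
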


We will end our paper with a special case when the groups are totally disconnected. In such case, $A_I(G)$ is isometrically algebraic isomorphic to $A(G)$. Thus the \emph{idempotent preserving} operators recover the results of algebraic homomorphisms.

\begin{remark}
Suppose that $G$ and $H$ are totally disconnected locally compact groups. Let $T: A(G) \to A(H)$ be a bounded complex linear operator satisfying $T(I(G))\subset I(H)$. Then there exists a continuous map $\psi$ from an open subset $U$ of $H$ into $G$ such that 
\[
Tf(a)=
 \begin{cases}
    f(\psi(a)) \quad & \text{if $a \in U$,} \\
    0                 & \text{if $a \in H \setminus U$,}
  \end{cases}
\]
for any $f \in A(G)$ and $a \in H$. In addition, if $T$ is a surjective contraction or $T$ is a positive bijection, then it is equivalent to  $Tf = f \circ (b \psi)$ for some $b \in G$ or $Tf= f \circ \psi$, respectively,  for all $f\in A(G)$ where $\psi: H\to G$ is a continuous group isomorphism or group anti-isomorphism; in particular, $T$ is an algebraic homomorphism.

\end{remark}

\subsection*{Acknowledgments}
The second author was supported by JSPS KAKENHI Grant Numbers JP21K13804.

\end{document}